\author[M.~Balcerzak]{Marek Balcerzak}
\address{Institute of Mathematics, Lodz University of Technology, ul. W\'{o}lcza\'{n}ska 215, 93-005 Lodz, Poland} 
\email{marek.balcerzak@p.lodz.pl}
\author{Paolo Leonetti}
\address{Department of Decision Sciences, Universit\`a ``Luigi Bocconi'', via Roentgen 1, 20136 Milan, Italy}
\email{leonetti.paolo@gmail.com}
\urladdr{https://sites.google.com/site/leonettipaolo/}
\keywords{Ideal cluster points; ideal limit points; meager set; analytic P-ideal; ideal convergence; subsequences; permutations.}
\subjclass[2010]{Primary: 40A35. Secondary: 11B05, 54A20.}
\title{The Baire Category of Subsequences and Permutations which preserve Limit Points}
   \def\MR#1{}
\newtheorem{thm}{Theorem}[section]
\newtheorem{cor}[thm]{Corollary}
\newtheorem{lem}[thm]{Lemma}
\newtheorem{prop}[thm]{Proposition}
\theoremstyle{definition} 
\let\olddefi\defi
\renewcommand{\defi}{\olddefi\normalfont}
\newtheorem{example}[thm]{Example}
\let\oldexample\example
\renewcommand{\example}{\oldexample\normalfont}
\newtheorem{rmk}[thm]{Remark}
\let\oldrmk\rmk
\renewcommand{\rmk}{\oldrmk\normalfont}
\newcommand{\clusterfin}{\mathrm{L}_x}
\providecommand{\MR}[1]{}
\providecommand{\MR}{\relax\ifhmode\unskip\space\fi MR }
\providecommand{\href}[2]{#2}
\begin{document}

\maketitle
\thispagestyle{empty}

\begin{abstract}
\noindent  Let $\mathcal{I}$ be a meager ideal on $\mathbf{N}$. We show that if $x$ is a sequence with values in a separable metric space then the set of subsequences [resp. permutations] of $x$ which preserve the set of $\mathcal{I}$-cluster points of $x$ is topologically large if and only if every ordinary limit point of $x$ is also an $\mathcal{I}$-cluster point of $x$. The analogue statement fails for all maximal ideals. This extends the main results in [Topology Appl. \textbf{263} (2019), 221--229]. 
As an application, if $x$ is a sequence with values in a first countable compact space which is $\mathcal{I}$-convergent to $\ell$, then the set of subsequences [resp. permutations] which are $\mathcal{I}$-convergent to $\ell$ is topologically large if and only if $x$ is convergent to $\ell$ in the ordinary sense. Analogous results hold for $\mathcal{I}$-limit points, provided $\mathcal{I}$ is an analytic P-ideal.
\end{abstract}

\section{Introduction}\label{sec:intro}

A classical result of Buck \cite{MR9997} states that, if $x$ is real sequence, then \textquotedblleft almost every\textquotedblright\, subsequence of $x$ has the same set of ordinary limit points of the original sequence $x$, in a measure sense. The aim of this note is to prove its topological analogue and non-analogue in the context of ideal convergence. 

Let $\mathcal{I}$ be an \emph{ideal} on the positive integers $\mathbf{N}$, that is, a family a subsets of $\mathbf{N}$ closed under subsets and finite unions. Unless otherwise stated, it is also assumed that $\mathcal{I}$ contains the ideal $\mathrm{Fin}$ of finite sets and it is different from the power set $\mathcal{P}(\mathbf{N})$. 
$\mathcal{I}$ is a P-ideal if it is $\sigma$-directed modulo finite sets, i.e., for every sequence $(A_n)$ of sets in $\mathcal{I}$ there exists $A \in \mathcal{I}$ such that $A_n\setminus A$ is finite for all $n$. 
We regard ideals as subsets of the Cantor space $\{0,1\}^{\mathbf{N}}$, hence we may speak about their topological complexities. In particular, an ideal can be $F_\sigma$, 
analytic, etc. 
Among the most important ideals, we find the family of asymptotic density zero sets 
$$
\textstyle \mathcal{Z}:=\{A\subseteq \mathbf{N}: \lim_{n\to \infty}\frac{1}{n}|A\cap [1,n]|=0\}.
$$
We refer to 
\cite{MR2777744} 
for a recent survey on ideals and associated filters.

Let $x=(x_n)$ be a sequence taking values in a topological space $X$, which will be always assumed to be Hausdorff. Then $\ell \in X$ is an $\mathcal{I}$\emph{-cluster point} of $x$ if 
$$
\{n \in \mathbf{N}: x_n \in U\} \notin \mathcal{I}
$$
for each neighborhood $U$ of $\ell$. The set of $\mathcal{I}$-cluster points of $x$ is denoted by $\Gamma_x(\mathcal{I})$. 
Moreover, $\ell \in X$ is an $\mathcal{I}$\emph{-limit point} of $x$ if there exists a subsequence $(x_{n_k})$ such that
$$
\lim_{k\to \infty}x_{n_k}=\ell
\,\,\,\text{ and }\,\,\,
\{n_k: k \in \mathbf{N}\} \notin \mathcal{I}.
$$
The set of $\mathcal{I}$-limit points is denoted by $\Lambda_x(\mathcal{I})$. Statistical cluster points and statistical limits points (that is, $\mathcal{Z}$-cluster points and $\mathcal{Z}$-limit points) of real sequences were introduced by Fridy in \cite{MR1181163} and studied by many authors, see e.g. \cite{MR1372186, 
MR2463821, 
MR1416085, MR1838788, MR1260176, MR1924673}. 
It is worth noting that ideal cluster points have been studied much before under a different name. Indeed, as it follows by \cite[Theorem 4.2]{MR3920799}, they correspond to classical ``cluster points'' of a filter $\mathscr{F}$ (depending on $x$) on the underlying space, cf. \cite[Definition 2, p.69]{MR1726779}. 
Let also $\mathrm{L}_x:=\Gamma_x(\mathrm{Fin})$ be the set of accumulation points of $x$, and note that $\mathrm{L}_x=\Lambda_x(\mathrm{Fin})$ if $X$ is first countable. 
Hence $\Lambda_x(\mathcal{I})\subseteq \Gamma_x(\mathcal{I}) \subseteq \mathrm{L}_x$. 
We refer the reader to \cite{MR3920799} for characterizations of $\mathcal{I}$-cluster points and  \cite{MR3883171} for their relation with $\mathcal{I}$-limit points. 
Lastly, we recall that the sequence $x$ is said to be $\mathcal{I}$\emph{-convergent} to $\ell \in X$, shortened as $x \to_{\mathcal{I}} \ell$, if 
$$
\{n \in \mathbf{N}: x_n \notin U\} \in \mathcal{I}
$$
for each neighborhood $U$ of $\ell$. Assuming that $X$ is first countable, it follows by \cite[Corollary 3.2]{MR3920799} that, if $x \to_{\mathcal{I}} \ell$ then $\Lambda_x(\mathcal{I})=\Gamma_x(\mathcal{I})=\{\ell\}$, provided that $\mathcal{I}$ is a P-ideal. 
In addition, if $X$ is also compact, then $x \to_{\mathcal{I}} \ell$ is in fact equivalent to $\Gamma_x(\mathcal{I})=\{\ell\}$, even if $\mathcal{I}$ is not a P-ideal, cf. \cite[Corollary 3.4]{MR3920799}.

Let $\Sigma$ be the sets of strictly increasing functions on $\mathbf{N}$, that is, 
$$
\Sigma:=\{\sigma \in \mathbf{N}^{\mathbf{N}}: \forall n \in \mathbf{N}, \sigma(n)<\sigma(n+1)\};
$$
also, let $\Pi$ be the sets of permutations of $\mathbf{N}$, that is, 
$$
\Pi:=\{\pi \in  \mathbf{N}^{\mathbf{N}}: \pi \text{ is a bijection}\}.
$$
Note that both $\Sigma$ and $\Pi$ are $G_\delta$-subsets of the Polish space $\mathbf{N}^{\mathbf{N}}$, hence they are Polish spaces as well by Alexandrov's theorem; in particular, they are not meager in themselves, cf. \cite[Chapter 2]{MR1619545}. 
Given a sequence $x$ and $\sigma \in \Sigma$, we denote by $\sigma(x)$ the subsequence $(x_{\sigma(n)})$. Similarly, given $\pi \in \Pi$, we write $\pi(x)$ for the rearranged sequence $(x_{\pi(n)})$. 
We identify each subsequence of $(x_{k_n})$ of $x$ with the function $\sigma \in \Sigma$ defined by $\sigma(n)=k_n$ for all $n \in \mathbf{N}$ and, similarly, each rearranged sequence $(x_{\pi(n)})$ with the permutation $\pi \in \Pi$, cf. \cite{MR3568092, MR3739371, MR1924673}.

We will show that 
if $\mathcal{I}$ is a meager ideal and $x$ is a sequence with values in a separable metric space then the set of subsequences (and permutations) of $x$ which preserve the set of $\mathcal{I}$-cluster points of $x$ is not meager if and only if every ordinary limit point of $x$ is also an $\mathcal{I}$-cluster point of $x$ (Theorem \ref{thm:theoremcluster}). 
A similar result holds for $\mathcal{I}$-limit points, provided that $\mathcal{I}$ is an analytic P-ideal (Theorem \ref{thm:theoremlimit}). 
Putting all together, this strenghtens all the results contained in \cite{MR3968131} and answers an open question therein.  
As a byproduct, we obtain a characterization of meager ideals (Proposition \ref{lem:talagrand}). 
Lastly, the analogue statements fails for all maximal ideals (Example \ref{ex:maximalcounterexample}). 

\section{Main results}\label{sec:mainresults}

\subsection{$\mathcal{I}$-cluster points.} 
It has been shown in \cite{MR3968131} that, from a topological viewpoint, almost all subsequences of $x$ preserve the set of $\mathcal{I}$-cluster points, provided that $\mathcal{I}$ is \textquotedblleft well separated\textquotedblright\, from its dual filter $\mathcal{I}^\star:=\{A\subseteq \mathbf{N}: A^c \in \mathcal{I}\}$; that is, 
$$
\Sigma_x(\mathcal{I}):=\left\{\sigma \in \Sigma: \Gamma_{\sigma(x)}(\mathcal{I})=\Gamma_x(\mathcal{I})\right\}
$$ 
is comeager, cf. also  \cite{MR3950736} for the case $\mathcal{I}=\mathcal{Z}$ and \cite{MR3879311} for a measure theoretic analogue. We will extend this result to all meager ideals. In addition, we will see that the same holds also for
$$
\Pi_x(\mathcal{I}):=\left\{\pi \in \Pi: \Gamma_{\pi(x)}(\mathcal{I})=\Gamma_x(\mathcal{I})\right\}.
$$

Here, given $\mathcal{A}, \mathcal{B}, \mathcal{C} \subseteq \mathcal{P}(\mathbf{N})$, we say that $\mathcal{A}$ is separated from $\mathcal{C}$ by $\mathcal{B}$ if $\mathcal{A}\subseteq \mathcal{B}$ and $\mathcal{B} \cap \mathcal{C}=\emptyset$. In particular, an ideal $\mathcal{I}$ is $F_\sigma$-separated from its dual filter $\mathcal{I}^\star$ if there exists an $F_\sigma$-set $\mathcal{B} \subseteq \mathcal{P}(\mathbf{N})$ such that $\mathcal{I}\subseteq \mathcal{B}$ and $\mathcal{B} \cap \mathcal{I}^\star=\emptyset$ (with the language of \cite{MR2520152, MR2990109}, the filter $\mathcal{I}^\star$ has rank $\le 1$). 
\begin{thm}\label{thm:oldtheoremcluster}
 \cite[Theorem 2.1]{MR3968131} Let $x$ be a sequence in a first countable space $X$ such that all closed sets are separable and let $\mathcal{I}$ be an ideal which is $F_\sigma$-separated from its dual filter $\mathcal{I}^\star$. Then 
$\Sigma_x(\mathcal{I})$ 
is not meager if and only if $\Gamma_x(\mathcal{I})=\clusterfin$. Moreover, in this case, it is comeager.
\end{thm}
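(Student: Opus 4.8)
The plan is to reduce the whole statement to the single assertion that, for comeager many $\sigma \in \Sigma$, one has $\Gamma_{\sigma(x)}(\mathcal{I}) = \clusterfin$. Granting this, both directions follow immediately from the fact that $\Gamma_x(\mathcal{I}) \subseteq \clusterfin$ always holds. Indeed, if $\Gamma_x(\mathcal{I}) = \clusterfin$, then the comeager set on which $\Gamma_{\sigma(x)}(\mathcal{I}) = \clusterfin$ is contained in $\Sigma_x(\mathcal{I})$, so $\Sigma_x(\mathcal{I})$ is comeager (and in particular not meager, since $\Sigma$ is a Baire space); conversely, if $\Gamma_x(\mathcal{I}) \subsetneq \clusterfin$, then the same comeager set is disjoint from $\Sigma_x(\mathcal{I})$, forcing $\Sigma_x(\mathcal{I})$ to be meager. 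This recovers the equivalence and the final comeagerness claim at one stroke.

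To establish the reduction I would first record the trivial inclusion $\Gamma_{\sigma(x)}(\mathcal{I}) \subseteq \clusterfin$, valid for \emph{every} $\sigma$: an $\mathcal{I}$-cluster point of $\sigma(x)$ is an ordinary accumulation point of $\sigma(x)$ (because $\mathcal{I} \supseteq \mathrm{Fin}$), hence an accumulation point of $x$. For the reverse inclusion I would exploit the hypotheses on $X$. Since $\clusterfin$ is closed it is separable, so fix a countable dense $D \subseteq \clusterfin$, and by first countability fix for each $\ell \in D$ a decreasing neighbourhood base $(U_{\ell,j})_j$. As $\Gamma_{\sigma(x)}(\mathcal{I})$ is itself closed, it suffices to force $D \subseteq \Gamma_{\sigma(x)}(\mathcal{I})$ on a comeager set; and the condition $\ell \in \Gamma_{\sigma(x)}(\mathcal{I})$ unwinds to $\sigma^{-1}(A_{\ell,j}) \notin \mathcal{I}$ for all $j$, where $A_{\ell,j} := \{n : x_n \in U_{\ell,j}\}$ is infinite because $\ell \in \clusterfin$. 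Since this is a countable family of conditions indexed by $\{(\ell,j): \ell \in D\}$, everything reduces to proving the following: \emph{if $A \subseteq \mathbf{N}$ is infinite, then $\{\sigma \in \Sigma : \sigma^{-1}(A) \in \mathcal{I}\}$ is meager.}

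To prove this key claim, let $\mathcal{B} = \bigcup_m C_m$ be an $F_\sigma$-separator with each $C_m$ closed, $\mathcal{I} \subseteq \mathcal{B}$, and $\mathcal{B} \cap \mathcal{I}^\star = \emptyset$; since $\mathcal{I} \subseteq \mathcal{B}$, it is enough to show that each $F_m := \{\sigma : \sigma^{-1}(A) \in C_m\}$ is nowhere dense. The map $\sigma \mapsto \sigma^{-1}(A)$ is continuous from $\Sigma$ into $\{0,1\}^{\mathbf{N}}$ (its $n$-th coordinate depends only on $\sigma(n)$), so $F_m$ is closed and I only need empty interior. Given a basic open set $N_s$ determined by a finite increasing stem $s$ of length $k$, consider the cofinite set $B^\ast$ obtained by continuing the stem forever inside $A$: it lies in $\mathcal{I}^\star$, hence outside the closed set $C_m$, so some cylinder $\{B : B \cap [1,k'] = B^\ast \cap [1,k']\}$ is disjoint from $C_m$. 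Choosing the finite extension $t \supseteq s$ whose values on $(k,k']$ all lie in $A$ (possible since $A$ is infinite), every $\sigma \in N_t$ satisfies $\sigma^{-1}(A) \cap [1,k'] = B^\ast \cap [1,k']$, so $\sigma^{-1}(A)$ lands in that cylinder and therefore outside $C_m$; thus $N_t \subseteq N_s$ with $N_t \cap F_m = \emptyset$, giving nowhere density.

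The main obstacle is precisely this final step, and it is exactly where the $F_\sigma$-separation hypothesis is used: the closed pieces $C_m$ avoid the entire dual filter $\mathcal{I}^\star$, and in particular every cofinite set, which is what permits one to "escape" $C_m$ inside any basic open set by steering the subsequence into $A$. Once the key claim is available, intersecting the resulting comeager sets over the countable index set $\{(\ell,j): \ell \in D\}$ produces a comeager set on which $D \subseteq \Gamma_{\sigma(x)}(\mathcal{I})$, whence $\clusterfin \subseteq \Gamma_{\sigma(x)}(\mathcal{I})$ by closedness of the cluster set; combined with the trivial reverse inclusion this yields $\Gamma_{\sigma(x)}(\mathcal{I}) = \clusterfin$ on a comeager set, completing the reduction and hence the theorem.
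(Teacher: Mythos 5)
Your proposal is correct, and its skeleton coincides with the paper's: reduce the theorem to the key claim that $\{\sigma\in\Sigma:\sigma^{-1}[A]\in\mathcal{I}\}$ is meager for every infinite $A\subseteq\mathbf{N}$ (equivalently, that each $S(\ell)=\{\sigma\in\Sigma:\ell\in\Gamma_{\sigma(x)}(\mathcal{I})\}$ is comeager for $\ell\in\clusterfin$), then intersect over a countable dense subset of $\clusterfin$ and conclude via the closedness of $\Gamma_{\sigma(x)}(\mathcal{I})$ together with the inclusions $\Gamma_{\sigma(x)}(\mathcal{I})\subseteq\mathrm{L}_{\sigma(x)}\subseteq\clusterfin$; this is exactly the structure of Lemma \ref{lem:fsigmanew} and of the proof of Theorem \ref{thm:theoremcluster}. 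The genuine difference lies in how the key claim is established. The paper never uses the $F_\sigma$-separation hypothesis directly: it obtains the statement as a special case of its meager-ideal theorem (separation from $\mathcal{I}^\star$ implies separation from $\mathrm{Fin}^\star$, which by Proposition \ref{lem:talagrand} is the same as meagerness), and its Lemma \ref{lem:0referee} proves the key claim \emph{positively}, exhibiting the dense $G_\delta$ set $\bigcap_n\bigcup_{k\ge n}\{\sigma: I_k\subseteq\sigma^{-1}[A]\}$ built from a Talagrand interval partition. You argue \emph{negatively} and straight from the stated hypothesis: you cover the bad set $\{\sigma:\sigma^{-1}[A]\in\mathcal{I}\}$ by the sets $\{\sigma:\sigma^{-1}[A]\in C_m\}$, each closed (by continuity of $\sigma\mapsto\sigma^{-1}[A]$) and nowhere dense because the closed piece $C_m$ avoids all cofinite sets while any finite stem can be steered into $A$. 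Both arguments turn on the same steering move, but yours is self-contained given the hypothesis as stated (no appeal to the Talagrand--Jalali-Naini characterization), whereas the paper's framework is what buys the generalization to arbitrary meager ideals. It is worth noting that your nowhere-density argument only uses that each $C_m$ misses $\mathrm{Fin}^\star$, not all of $\mathcal{I}^\star$; hence, combined with the implication \ref{item:m1} $\Rightarrow$ \ref{item:m3} of Proposition \ref{lem:talagrand}, your proof also yields verbatim the subsequence part of the stronger Theorem \ref{thm:theoremcluster} for every meager ideal.
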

As it has been shown in \cite[Corollary 1.5]{MR1758325}, the family of ideals $\mathcal{I}$ which are $F_{\sigma}$-separated from $\mathcal{I}^\star$ includes all $F_{\sigma\delta}$-ideals. 
In addition, a Borel ideal 
is $F_\sigma$-separated from its dual filter 
if and only if it does not contain an isomorphic copy of $\mathrm{Fin}\times \mathrm{Fin}$ (which can be represented as an ideal on $\mathbf{N}$ as 
$$
\{A\subseteq \mathbf{N}: 
\forall^\infty n \in \mathbf{N}, \{a \in A: \nu_2(a)=n\}\in \mathrm{Fin} \}
$$
where $\nu_2(n)$ stands for the $2$-adic valuation of $n$), see  \cite[Theorem 4]{MR2491780}. 
In particular, $\mathrm{Fin}\times \mathrm{Fin}$ is a $F_{\sigma\delta\sigma}$-ideal which is not $F_\sigma$-separated from its dual filter. 
For related results on $F_\sigma$-separation, see \cite[Proposition 3.6]{MR3090420} and \cite{MR2520149}.

We show that the analogue of Theorem \ref{thm:oldtheoremcluster} holds for all meager ideals. Hence, this includes new cases as, for instance, $\mathcal{I}=\mathrm{Fin}\times \mathrm{Fin}$. 

It is worth noting that every meager ideal $\mathcal{I}$ is $F_\sigma$-separated from the Fr\'{e}chet filter $\mathrm{Fin}^\star$ (see Proposition \ref{lem:talagrand} below), hence $\mathcal{I}$ is $F_\sigma$-separated from $\mathcal{I}^\star$. This implies that our result is a proper generalization of Theorem \ref{thm:oldtheoremcluster}:

\begin{thm}\label{thm:theoremcluster}
Let $x$ be a sequence in a first countable space $X$ such that all closed sets are separable and let $\mathcal{I}$ be a meager ideal. Then 
the following are equivalent: 
\begin{enumerate}[label={\rm (\textsc{c}\arabic{*})}]
\item \label{item:s1} $\Sigma_x(\mathcal{I})$ is comeager in $\Sigma$\textup{;}
\item \label{item:s2} $\Sigma_x(\mathcal{I})$ is not meager in $\Sigma$\textup{;}
\item \label{item:p1} $\Pi_x(\mathcal{I})$ is comeager in $\Pi$\textup{;}
\item \label{item:p2} $\Pi_x(\mathcal{I})$ is not meager in $\Pi$\textup{;}
\item \label{item:s3} $\Gamma_x(\mathcal{I})=\clusterfin$\textup{.}
\end{enumerate}
\end{thm}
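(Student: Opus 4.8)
The plan is to establish the two cycles \ref{item:s1}$\Rightarrow$\ref{item:s2}$\Rightarrow$\ref{item:s3}$\Rightarrow$\ref{item:s1} and \ref{item:p1}$\Rightarrow$\ref{item:p2}$\Rightarrow$\ref{item:s3}$\Rightarrow$\ref{item:p1}. Since $\Sigma$ and $\Pi$ are Polish, hence non-meager in themselves, comeagerness implies non-meagerness, so \ref{item:s1}$\Rightarrow$\ref{item:s2} and \ref{item:p1}$\Rightarrow$\ref{item:p2} are free and everything reduces to the two ``hard'' implications \ref{item:s2}$\Rightarrow$\ref{item:s3} and \ref{item:s3}$\Rightarrow$\ref{item:s1}, together with their verbatim permutation analogues. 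The whole argument rests on a single category lemma which replaces the $F_\sigma$-separation hypothesis of Theorem \ref{thm:oldtheoremcluster} by meagerness.

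The key lemma states that if $A\subseteq \mathbf{N}$ is infinite, then $\{\sigma\in\Sigma:\sigma^{-1}(A)\notin\mathcal{I}\}$ is comeager in $\Sigma$, and likewise $\{\pi\in\Pi:\pi^{-1}(A)\notin\mathcal{I}\}$ is comeager in $\Pi$. To prove it I would invoke Proposition \ref{lem:talagrand} to fix a partition of $\mathbf{N}$ into consecutive finite intervals $(I_m)$ witnessing meagerness, so that any set containing $I_m$ for infinitely many $m$ lies outside $\mathcal{I}$. For each $M$ the set $G_M:=\{\sigma:\sigma(I_m)\subseteq A \text{ for some } m\ge M\}$ is open (the constraint is finitary) and dense (given a finite increasing stub, choose $m\ge M$ with $\min I_m$ beyond the stub and map $I_m$ increasingly into the infinite set $A$, then complete monotonically); hence $\bigcap_M G_M$ is comeager, and every $\sigma$ in it satisfies $\sigma^{-1}(A)\supseteq I_m$ for infinitely many $m$, so $\sigma^{-1}(A)\notin\mathcal{I}$. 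The permutation case is identical, except that in the density step the finite partial injection is completed to a bijection by a back-and-forth argument.

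For \ref{item:s2}$\Rightarrow$\ref{item:s3} I argue contrapositively. Assuming $\Gamma_x(\mathcal{I})\neq\clusterfin$, and using $\Gamma_x(\mathcal{I})\subseteq\clusterfin$, pick $\ell\in\clusterfin\setminus\Gamma_x(\mathcal{I})$ and a decreasing neighborhood basis $(U_j)$ of $\ell$ with $\{n:x_n\in U_1\}\in\mathcal{I}$; each $A_j:=\{n:x_n\in U_j\}$ then lies in $\mathcal{I}$ yet is infinite because $\ell$ is an ordinary accumulation point. By the lemma applied to each $A_j$ and a countable intersection, comeagerly many $\sigma$ satisfy $\sigma^{-1}(A_j)\notin\mathcal{I}$ for all $j$, i.e. $\ell\in\Gamma_{\sigma(x)}(\mathcal{I})\setminus\Gamma_x(\mathcal{I})$, so $\sigma\notin\Sigma_x(\mathcal{I})$; hence $\Sigma_x(\mathcal{I})$ is meager. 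The same works in $\Pi$, giving \ref{item:p2}$\Rightarrow$\ref{item:s3}.

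For \ref{item:s3}$\Rightarrow$\ref{item:s1}, note that under \ref{item:s3} one has $\Gamma_{\sigma(x)}(\mathcal{I})\subseteq\mathrm{L}_{\sigma(x)}\subseteq\clusterfin=\Gamma_x(\mathcal{I})$ for every $\sigma$, so only the reverse inclusion must be forced comeagerly. Here the hypothesis that all closed sets are separable enters: the closed set $\Gamma_x(\mathcal{I})$ admits a countable dense subset $D$, and fixing a countable neighborhood basis at each point of $D$ produces a countable family of sets $B\notin\mathcal{I}$ (each $\mathcal{I}$-positive, hence infinite, since they arise from neighborhoods of $\mathcal{I}$-cluster points) such that controlling $\sigma^{-1}(B)$ for all of them suffices. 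By the lemma plus a countable intersection, comeagerly many $\sigma$ have $\sigma^{-1}(B)\notin\mathcal{I}$ for every $B$ in the family; then, given $\ell\in\Gamma_x(\mathcal{I})$ and an open $W\ni\ell$, density of $D$ yields a member of the family with $B\subseteq\{n:x_n\in W\}$, whence $\{k:x_{\sigma(k)}\in W\}\supseteq\sigma^{-1}(B)\notin\mathcal{I}$ and $\ell\in\Gamma_{\sigma(x)}(\mathcal{I})$. Thus comeagerly many $\sigma$ lie in $\Sigma_x(\mathcal{I})$. The permutation version is the same, using additionally that $\mathrm{L}_{\pi(x)}=\clusterfin$ for every $\pi$. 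I expect the main obstacle to be precisely this separability reduction, namely selecting a single countable test family that simultaneously certifies membership of every (possibly uncountably many) $\mathcal{I}$-cluster point, together with the bijective bookkeeping in the permutation density step.
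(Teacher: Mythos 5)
Your proposal is correct and follows essentially the same route as the paper: your key lemma is exactly the paper's Lemma \ref{lem:0referee} (proved the same way, via the interval characterization of meager ideals in Proposition \ref{lem:talagrand}), and both implication cycles are handled as in the paper, with a countable dense subset of $\clusterfin$ and a countable intersection of comeager sets certifying the reverse inclusion $\clusterfin\subseteq\Gamma_{\sigma(x)}(\mathcal{I})$. The only cosmetic difference is that where the paper cites the closedness of $\Gamma_{\sigma(x)}(\mathcal{I})$ to pass from the dense subset to all of $\clusterfin$, you reprove that fact inline by the neighborhood argument.
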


It is worth to spend some comments on the class $\mathscr{C}$ of first countable spaces such that all closed sets are separable. It is clear that separable metric spaces belong to $X$. However, $\mathscr{C}$ contains also nonmetrizable spaces:

\begin{example}\label{ex:nonmetrizable}
It has been shown by Ostaszewski in \cite{MR438292} that there exists a topological space $X$ which is hereditarily separable (i.e., all subsets of $X$ are separable), first countable, countably compact, and not compact, cf. also \cite{MR428245}. In particular, $X \in \mathscr{C}$. However, $X$ is not second countable (indeed, in the opposite, the notions of compactness and countably compactness would coincide). Considering that all separable metric spaces are second countable, it follows that $X$ is not metrizable.
\end{example}

Moreover, there exists a separable first countable space outside $\mathscr{C}$:
\begin{example}\label{ex:separablenonhereditary}
Let $X$ be the Sorgenfrey plane, that is, the product of two copies of the real line $\mathbf{R}$ endowed with the lower limit topology. 
It is known that $X$ is first countable and separable. 
Moreover, the anti-diagonal $\{(x,-x): x \in \mathbf{R}\}$ is a closed uncountable discrete subspace of $X$, hence not separable, cf. \cite[pp. 103--105]{MR1382863}. Therefore $X\notin \mathscr{C}$. 
A similar example is the Moore plane with the tangent disk topology, cf. \cite[p. 176]{MR1382863}.
\end{example}

We can also show that there exists a first countable space outside $\mathscr{C}$ which satisfies the statement of Theorem \ref{thm:theoremcluster}:
\begin{example}\label{example:powerset}
Let $X$ be an uncountable set, endowed the discrete topology. Then $X$ is nonseparable first countable space, so that $X\notin \mathscr{C}$. Thanks to Remark \ref{rmk:uncountable} below, Theorem \ref{thm:theoremcluster} holds if the separability of all closed sets is replaced by the condition that $\mathrm{L}_x$ is countable \emph{for each} sequence $x$ taking values in $X$. Indeed, the latter is verified because $\mathrm{L}_x \subseteq \{x_n: n \in \mathbf{N}\}$.
\end{example}

We leave as open question 
whether there exists a topological space $X$ (necessarily outside $\mathscr{C}$) and a meager ideal $\mathcal{I}$ for which Theorem \ref{thm:theoremcluster} fails. 
It is well possible that our main result extends beyond $\mathscr{C}$, as it happened very recently with related results, see e.g. the improvement of \cite[Theorem 4.2]{MR3955010} in \cite[Lemma 2.2]{MR4126774}.

%
%

Lastly, 
one may ask whether Theorem \ref{thm:theoremcluster} holds for all ideals.  
We show in the following example that the answer is negative: 
\begin{example}\label{ex:maximalcounterexample}
Let $\mathcal{I}$ be a maximal ideal. Hence there exists a unique $A \in \{2\mathbf{N}+1,2\mathbf{N}+2\}$ such that $A \in \mathcal{I}$. Set $X=\mathbf{R}$. Let $x$ be the characteristic function of $A$, i.e., $x_n=1$ if $n \in A$ and $x_n=0$ otherwise. Then $x \to_{\mathcal{I}} 0$. In particular, $\Gamma_x(\mathcal{I})=\{0\}$. Note that a subsequence $\sigma(x)$ is $\mathcal{I}$-convergent to $0$ 
if and only if $\Gamma_{\sigma(x)}=\{0\}$. Then 
$$
\Sigma_x(\mathcal{I})=\{\sigma \in \Sigma: \sigma^{-1}[A] \in \mathcal{I}\}.
$$
Considering that $\sigma^{-1}[A] \cup \sigma^{-1}[A-1]$ is cofinite, we have either $\sigma^{-1}[A] \in \mathcal{I}$ or $\sigma^{-1}[A-1]\in \mathcal{I}$.  Let $T: \Sigma \to \Sigma$ be the embedding defined by $\sigma \mapsto \sigma+1$, so that $\Sigma$ is homeomorphic to the open set $T[\Sigma]=\{\sigma \in \Sigma: \sigma(1)\ge 2\}$. Notice that
\begin{displaymath}
\begin{split}
T[\Sigma_x(\mathcal{I})]&=\{T(\sigma): \sigma \in \Sigma_x(\mathcal{I})\}
=\{\sigma+1 \in \Sigma: \sigma^{-1}[A] \in \mathcal{I}\}\\
&=\{\sigma \in \Sigma: \sigma^{-1}[A-1] \in \mathcal{I}\} \cap T[\Sigma],
\end{split}
\end{displaymath}
which implies that the open set $T[\Sigma]$ is contained in $\Sigma_x(\mathcal{I}) \cup T[\Sigma_x(\mathcal{I})]$. Therefore both $\Sigma_x(\mathcal{I})$ and $T[\Sigma_x(\mathcal{I})]$ are not meager.

A similar example can be found for $\Pi_x(\mathcal{I})$, replacing the embedding $T$ with the homeomorphism $H: \Pi\to \Pi$ defined by $H(\pi)(2n)=2n-1$ and $H(\pi)(2n-1)=2n$ for all $n \in \mathbf{N}$.
\end{example}

As noted by the referee, the equivalences \ref{item:s1} $\Longleftrightarrow$ \ref{item:s2} and \ref{item:p1} $\Longleftrightarrow$ \ref{item:p2}, and their analogues in the next results, can be viewed a consequence a general topological $0\text{-}1$ law stating that every tail set with the property of Baire is either meager or comeager, see \cite[Theorem 21.4]{MR584443}. However, it seems rather difficult to show that the tail sets $\Sigma_x(\mathcal{I})$ and $\Pi_x(\mathcal{I})$ have the property of Baire, provided that $\mathcal{I}$ is meager (note that this is surely false if $\mathcal{I}$ is a maximal ideal, as it follows by Example \ref{ex:maximalcounterexample}).
 

As an application of our results, 
if $x$ is $\mathcal{I}$-convergent to $\ell$, then the set of subsequences [resp., rearrangements] of $x$ which are $\mathcal{I}$-convergent to $\ell$ is not meager if and only if $x$ is convergent (in the classical sense) to $\ell$. 
This is 
somehow related to \cite[Theorem 2.1]{MR3568092} and \cite[Theorem 1.1]{MR3739371}; cf. also \cite[Theorem 3]{MR1260176} for a measure theoretical non-analogue.
\begin{cor}\label{thm:applicationidealconvergence}
Let $x$ be a sequence in a first countable compact space $X$. Let $\mathcal{I}$ be a meager ideal and assume that $x$ is $\mathcal{I}$-convergent to $\ell \in X$. Then the following are equivalent:
\begin{enumerate}[label={\rm (\textsc{i}\arabic{*})}]
\item \label{item:c2} $\{\sigma \in \Sigma: \sigma(x) \to_{\mathcal{I}} \ell\}$ is comeager in $\Sigma$\textup{;}
\item \label{item:c3} $\{\sigma \in \Sigma: \sigma(x) \to_{\mathcal{I}} \ell\}$ is not meager in $\Pi$\textup{;}
\item \label{item:c4} $\{\pi \in \Pi: \pi(x) \to_{\mathcal{I}} \ell\}$ is comeager in $\Sigma$\textup{;}
\item \label{item:c5} $\{\pi \in \Pi: \pi(x) \to_{\mathcal{I}} \ell\}$ is not meager in $\Pi$\textup{;}
\item \label{item:c1} $\lim_n x_n=\ell$\textup{.}
\end{enumerate}
\end{cor}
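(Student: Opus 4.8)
The plan is to derive Corollary~\ref{thm:applicationidealconvergence} from Theorem~\ref{thm:theoremcluster} after three preliminary reductions. First I would pass to the subspace $Y:=\overline{\{x_n:n\in\mathbf{N}\}}$. Being a closed subset of the compact space $X$ it is compact; being the closure of a countable set it is separable; and, as a subspace of a first countable space, it is first countable. Every subsequence $\sigma(x)$ and every rearrangement $\pi(x)$ takes values in $Y$, and the sets $\Gamma_{\sigma(x)}(\mathcal{I})$, $\mathrm{L}_{\sigma(x)}$ (and their analogues for $\pi$ and for $x$ itself) coincide whether computed in $X$ or in $Y$, because $Y$ is closed and contains the range. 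Thus I may and do work inside $Y$; to invoke Theorem~\ref{thm:theoremcluster} for $x$ regarded as a sequence in $Y$, one must check that $Y$ belongs to the class $\mathscr{C}$, a point I return to at the end.

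Second, I would convert $\mathcal{I}$-convergence into a statement about $\mathcal{I}$-cluster points. Since $Y$ is first countable and compact, the result recalled in the introduction (\cite[Corollary 3.4]{MR3920799}) gives, for \emph{every} sequence $y$ with values in $Y$, the equivalence
$$
y\to_{\mathcal{I}}\ell \quad\Longleftrightarrow\quad \Gamma_y(\mathcal{I})=\{\ell\}.
$$
Applying this to $y=x$ and using the hypothesis $x\to_{\mathcal{I}}\ell$ yields $\Gamma_x(\mathcal{I})=\{\ell\}$. Applying it simultaneously to $y=\sigma(x)$ and to $y=\pi(x)$ gives
$$
\{\sigma\in\Sigma:\sigma(x)\to_{\mathcal{I}}\ell\}=\{\sigma\in\Sigma:\Gamma_{\sigma(x)}(\mathcal{I})=\Gamma_x(\mathcal{I})\}=\Sigma_x(\mathcal{I}),
$$
and likewise $\{\pi\in\Pi:\pi(x)\to_{\mathcal{I}}\ell\}=\Pi_x(\mathcal{I})$. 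Hence conditions \ref{item:c2}--\ref{item:c5} are exactly conditions \ref{item:s1}--\ref{item:p2} of Theorem~\ref{thm:theoremcluster}, which are all equivalent to \ref{item:s3}, i.e. to $\Gamma_x(\mathcal{I})=\mathrm{L}_x$.

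Third, it remains to check that, under the standing equality $\Gamma_x(\mathcal{I})=\{\ell\}$, the condition $\Gamma_x(\mathcal{I})=\mathrm{L}_x$ is equivalent to \ref{item:c1}, that is, to $\lim_n x_n=\ell$. Since $\Gamma_x(\mathcal{I})=\{\ell\}$, the equality $\Gamma_x(\mathcal{I})=\mathrm{L}_x$ says precisely that $\mathrm{L}_x=\{\ell\}$. If $\lim_n x_n=\ell$, then in the Hausdorff space $Y$ the unique accumulation point of $x$ is $\ell$, so $\mathrm{L}_x=\{\ell\}$. For the converse I would argue by contradiction: if $x\not\to\ell$, there is an open neighborhood $U$ of $\ell$ with $x_n\notin U$ for infinitely many $n$; these terms lie in the compact set $Y\setminus U$, and since $Y$ is compact and first countable it is sequentially compact, so a sub-subsequence of them converges to some $p\in Y\setminus U$. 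Then $p\in\mathrm{L}_x$ and $p\neq\ell$, contradicting $\mathrm{L}_x=\{\ell\}$. This closes the circle of equivalences.

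The core argument is short and essentially bookkeeping; the step deserving genuine care is the first one, namely justifying that the separable compact first countable space $Y$ lies in $\mathscr{C}$, i.e. that each of its closed (equivalently, compact) subsets is separable, which is exactly what licenses the application of Theorem~\ref{thm:theoremcluster}. I would also be careful to invoke \cite[Corollary 3.4]{MR3920799} uniformly across all $\sigma$ and $\pi$, since it is this uniformity that lets me \emph{identify} the convergence sets in \ref{item:c2}--\ref{item:c5} with $\Sigma_x(\mathcal{I})$ and $\Pi_x(\mathcal{I})$ rather than merely compare them.
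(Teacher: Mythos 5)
Your overall route (identify the convergence sets with $\Sigma_x(\mathcal{I})$ and $\Pi_x(\mathcal{I})$ via \cite[Corollary 3.4]{MR3920799}, feed them into Theorem \ref{thm:theoremcluster}, then convert $\mathrm{L}_x=\{\ell\}$ into $\lim_n x_n=\ell$ by compactness) is the paper's route, and your second and third steps are correct. The gap is precisely the step you flagged but never carried out: the claim that $Y=\overline{\{x_n:n\in\mathbf{N}\}}$, being separable, compact and first countable, must lie in $\mathscr{C}$. That claim is false in ZFC. Let $D$ be the double arrow space $[0,1]\times\{0,1\}$ with the lexicographic order topology and put $X=D\times D$; then $X$ is compact, first countable and separable. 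Consider the uncountable set $A=\{((t,1),(1-t,1)):t\in(0,1)\}$ and, for small $\varepsilon>0$, the open box $U_t\times V_t$, where $U_t$ is the set of points of $D$ strictly between $(t,0)$ and $(t+\varepsilon,0)$ and $V_t$ the set of points strictly between $(1-t,0)$ and $(1-t+\varepsilon,0)$. A point $((s,1),(1-s,1))$ of $A$ lies in $U_t\times V_t$ only if $s\ge t$ and $1-s\ge 1-t$, i.e.\ only if $s=t$; hence $A$ is discrete. In a $T_1$ space, an open set meeting $A$ in exactly one point meets $\overline{A}$ in exactly that point (otherwise removing that single point would give an open neighborhood of another closure point missing $A$). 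Therefore every point of $A$ is isolated in the closed set $\overline{A}$, which thus has uncountably many isolated points and cannot be separable. So $X\notin\mathscr{C}$, and taking $x$ to enumerate a countable dense subset of $X$ gives $Y=X$: your $Y$ need not belong to $\mathscr{C}$ (compactness does not repair Example \ref{ex:separablenonhereditary}), so the black-box appeal to Theorem \ref{thm:theoremcluster} is unjustified as written.

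The repair is what the paper's own proof does, by invoking Remark \ref{rmk:uncountable} rather than membership in $\mathscr{C}$. The implication you actually need from nonmeagerness, namely \ref{item:s2} $\Rightarrow$ \ref{item:s3} of Theorem \ref{thm:theoremcluster}, never uses separability of closed sets: if there were $p\in\mathrm{L}_x\setminus\Gamma_x(\mathcal{I})=\mathrm{L}_x\setminus\{\ell\}$, then $\Sigma_x(\mathcal{I})\subseteq\Sigma\setminus S(p)$ would be meager by Lemma \ref{lem:fsigmanew}, which requires only first countability of $X$ and meagerness of $\mathcal{I}$; the same holds for permutations. Combined with your compactness argument this yields \ref{item:c3} $\Rightarrow$ \ref{item:c1} and \ref{item:c5} $\Rightarrow$ \ref{item:c1}. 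In the converse direction no theorem is needed at all: if $\lim_n x_n=\ell$, then every subsequence and every rearrangement of $x$ converges to $\ell$ in the ordinary sense, hence $\mathcal{I}$-converges to $\ell$ since $\mathrm{Fin}\subseteq\mathcal{I}$, so the sets in \ref{item:c2} and \ref{item:c4} equal all of $\Sigma$ and $\Pi$ and are trivially comeager. With these two substitutions your argument closes; resting on $Y\in\mathscr{C}$, it does not.
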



The proofs of Theorem \ref{thm:theoremcluster} and Corollary \ref{thm:applicationidealconvergence} follow in Section \ref{sec:proofcluster}.

\subsection{$\mathcal{I}$-limit points.} 
Given a sequence $x$ and an ideal $\mathcal{I}$, define 
$$
\tilde{\Sigma}_x(\mathcal{I}):=\{\sigma \in \Sigma: \Lambda_{\sigma(x)}(\mathcal{I})=\Lambda_x(\mathcal{I})\}
$$
and its analogue for permutations
$$
\tilde{\Pi}_x(\mathcal{I}):=\{\pi \in \Pi: \Lambda_{\pi(x)}(\mathcal{I})=\Lambda_x(\mathcal{I})\}.
$$

It has been shown in \cite{MR3968131} that, in the case of $\mathcal{I}$-limit points, the counterpart of Theorem \ref{thm:oldtheoremcluster} holds for generalized density ideals. Here, an ideal $\mathcal{I}$ is said to be a \emph{generalized density ideal} if there exists a sequence $(\mu_n)$ of submeasures with finite and pairwise disjoint supports such that $\mathcal{I}=\{A\subseteq \mathbf{N}: \lim_n \mu_n(A)=0\}$. More precisely:
\begin{thm}\label{thm:oldtheoremlimit}
 \cite[Theorem 2.3]{MR3968131} Let $x$ be a sequence in a first countable space $X$ such that all closed sets are separable and let $\mathcal{I}$ be generalized density ideal. Then $\tilde{\Sigma}_x(\mathcal{I})$ is not meager if and only if $\Lambda_x(\mathcal{I})=\clusterfin$. Moreover, in this case, it is comeager.
\end{thm}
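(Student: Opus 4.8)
The plan is to reduce both implications to a single Baire category statement: for each fixed $\ell \in \clusterfin$, the set $E_\ell := \{\sigma \in \Sigma : \ell \in \Lambda_{\sigma(x)}(\mathcal{I})\}$ is comeager in $\Sigma$. Granting this, the theorem follows quickly. For the substantial direction, suppose $\Lambda_x(\mathcal{I}) = \clusterfin$. Since $\clusterfin$ is a closed subspace of $X$, it is separable by hypothesis, so I would fix a countable dense set $D \subseteq \clusterfin$ and intersect the comeager sets $E_d$ over $d \in D$ to obtain a comeager set of $\sigma$ with $D \subseteq \Lambda_{\sigma(x)}(\mathcal{I})$. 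Because $\Lambda_{\sigma(x)}(\mathcal{I})$ is closed for generalized density ideals (these being analytic P-ideals, cf. \cite{MR3883171}) and $\overline{D} = \clusterfin$, this yields $\clusterfin \subseteq \Lambda_{\sigma(x)}(\mathcal{I})$; combined with the standing inclusions $\Lambda_{\sigma(x)}(\mathcal{I}) \subseteq \mathrm{L}_{\sigma(x)} \subseteq \clusterfin$ it gives $\Lambda_{\sigma(x)}(\mathcal{I}) = \clusterfin = \Lambda_x(\mathcal{I})$ on a comeager set, i.e.\ $\tilde{\Sigma}_x(\mathcal{I})$ is comeager. Conversely, if $\Lambda_x(\mathcal{I}) \neq \clusterfin$, I would pick $\ell \in \clusterfin \setminus \Lambda_x(\mathcal{I})$ and apply the same statement to this single $\ell$: comeagerly $\ell \in \Lambda_{\sigma(x)}(\mathcal{I}) \setminus \Lambda_x(\mathcal{I})$, so $\Lambda_{\sigma(x)}(\mathcal{I}) \neq \Lambda_x(\mathcal{I})$ comeagerly and $\tilde{\Sigma}_x(\mathcal{I})$ is meager. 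Both the equivalence and the final ``moreover'' assertion then drop out.

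The heart of the argument is the claim that $E_\ell$ is comeager, where the block structure of generalized density ideals is essential. Write $\mathcal{I} = \{A : \lim_n \mu_n(A) = 0\}$ with the $\mu_n$ supported on pairwise disjoint finite sets $I_n$. Since $\mathbf{N} \notin \mathcal{I}$ and $\mu_n(\mathbf{N}) = \mu_n(I_n)$, one has $\limsup_n \mu_n(I_n) > 0$, so $N^\ast := \{n : \mu_n(I_n) > c\}$ is infinite for a suitable $c > 0$. Fixing a decreasing neighbourhood base $(U_m)$ at $\ell$ (first countability) and putting $S_m := \{k : x_k \in U_m\}$, which is infinite because $\ell \in \clusterfin$, I would define for each $n \in N^\ast$ the relatively open set $V_n := \{\sigma \in \Sigma : \sigma(k) \in S_n \text{ for all } k \in I_n\}$, which constrains only the finitely many coordinates indexed by $I_n$. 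A short density check shows that $\bigcup_{n \in N^\ast,\, n \ge M} V_n$ is dense for every $M$: given a basic neighbourhood fixing $\sigma$ on an initial segment $\{1, \dots, m\}$, one chooses $n \in N^\ast$ with $n \ge M$ and $\min I_n > m$, and extends $\sigma$ so that $\sigma[I_n] \subseteq S_n$, which is possible since $S_n$ is infinite. Hence $W := \bigcap_M \bigcup_{n \in N^\ast,\, n \ge M} V_n$ is a dense $G_\delta$, and I claim $W \subseteq E_\ell$. Indeed, for $\sigma \in W$ let $n_1 < n_2 < \cdots$ enumerate the $n \in N^\ast$ with $\sigma[I_n] \subseteq S_n$ and set $B := \bigcup_j I_{n_j}$: on one hand $x_{\sigma(k)} \to \ell$ along $B$, since any neighbourhood $U$ of $\ell$ contains some $U_{m_0}$, and all but finitely many $k \in B$ lie in blocks $I_{n_j}$ with $n_j \ge m_0$, whence $x_{\sigma(k)} \in U_{n_j} \subseteq U$; on the other hand $B \notin \mathcal{I}$, because $\mu_{n_j}(B) = \mu_{n_j}(I_{n_j}) > c$ for all $j$ forces $\limsup_n \mu_n(B) \ge c > 0$. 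Thus $\ell \in \Lambda_{\sigma(x)}(\mathcal{I})$.

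The step I expect to be the main obstacle is exactly the verification that the extracted witness escapes the ideal, $B \notin \mathcal{I}$: this is where the disjoint, uniformly non-negligible supports $I_n$ are indispensable, and it is the feature that fails to persist for arbitrary ideals without extra work, which is presumably why the extension in Theorem \ref{thm:theoremlimit} to all analytic P-ideals demands a more delicate construction. A secondary point needing care is the passage from the countable dense $D$ to all of $\clusterfin$, which rests on the closedness of $\Lambda_{\sigma(x)}(\mathcal{I})$; since this can fail for non-P-ideals, the restriction to generalized density (hence P-) ideals is genuinely used on both ends of the argument.
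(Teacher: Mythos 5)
Your proof skeleton (comeagerness of $E_\ell$ for each $\ell\in\clusterfin$, intersection over a countable dense subset of the closed separable set $\clusterfin$, and the single-point argument for the meager direction) is the same as the one the paper uses for its generalization, Theorem~\ref{thm:theoremlimit}; note the paper does not reprove the present statement but quotes it from \cite{MR3968131}. Your dense-$G_\delta$ block construction of $W\subseteq E_\ell$ is correct, and it is a genuinely more elementary route to the key comeagerness than the paper's submeasure machinery (Lemmas~\ref{lem:analoguecomeagerold} and~\ref{lem:analoguecomeageroldsecond}); the meager direction is also complete as written. The gap is in the forward direction, at the step ``Because $\Lambda_{\sigma(x)}(\mathcal{I})$ is closed.'' This is false, already for $\mathcal{I}=\mathcal{Z}$, and \cite{MR3883171} does not claim it: partition $\mathbf{N}$ into sets $A_m$ with asymptotic density $2^{-m}$ and set $x_n=1/m$ for $n\in A_m$. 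Then $1/m\in\Lambda_x(\mathcal{Z})$ for every $m$ (witness $A_m$), but $0\notin\Lambda_x(\mathcal{Z})$: if $x_n\to 0$ along $B$, then for every $M$ all but finitely many elements of $B$ lie in $\bigcup_{m>M}A_m$, whence $\overline{d}(B)\le 2^{-M}$, and letting $M\to\infty$ gives $B\in\mathcal{Z}$. So $\Lambda_x(\mathcal{Z})=\{1/m: m\in\mathbf{N}\}$ is not closed. What is true, and what the paper actually uses, is that $\Lambda_{\sigma(x)}(\mathcal{I})$ is a countable union of the \emph{closed} level sets $\Lambda_{\sigma(x)}(\mathcal{I},q)$ (Lemma~\ref{lem:uppersemicontinuous} and \eqref{eq:inclusionlimitpoints}); such a union need not be closed, so from $D\subseteq\Lambda_{\sigma(x)}(\mathcal{I})$ you cannot conclude $\overline{D}\subseteq\Lambda_{\sigma(x)}(\mathcal{I})$.

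Fortunately, your construction already contains the repair, and it is exactly the paper's device. Your witnesses $B=\bigcup_j I_{n_j}$ satisfy $\mu_{n_j}(B)>c$ for all $j$ with a constant $c>0$ depending only on $\mathcal{I}$, hence $\limsup_n\mu_n(B)\ge c$. In submeasure language (for a generalized density ideal one has $\|B\|_\varphi=\limsup_n\mu_n(B)$, after truncating the $\mu_n$ at $1$), this says $W\subseteq\tilde{S}(\ell,c)=\{\sigma:\ell\in\Lambda_{\sigma(x)}(\mathcal{I},c)\}$, which is strictly more than $W\subseteq E_\ell$. The level set $\Lambda_{\sigma(x)}(\mathcal{I},c)$ \emph{is} closed --- by Lemma~\ref{lem:uppersemicontinuous}, or by a direct diagonal argument across your blocks, which works precisely because of the uniform lower bound $c$ (a diagonal limit of witnesses whose masses tend to $0$ would fall into the ideal, which is exactly what happens to the point $0$ in the counterexample above). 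Intersecting over $d\in D$ then gives, on a comeager set, $\clusterfin=\overline{D}\subseteq\Lambda_{\sigma(x)}(\mathcal{I},c)\subseteq\Lambda_{\sigma(x)}(\mathcal{I})\subseteq\mathrm{L}_{\sigma(x)}\subseteq\clusterfin$, and the forward direction closes. This is precisely how the paper argues in Theorem~\ref{thm:theoremlimit}, with the fixed level $q=\nicefrac{1}{2}$ in place of your $c$. Your closing remark should be corrected accordingly: closedness of the full set of $\mathcal{I}$-limit points fails even for P-ideals; what the P-ideal/analytic structure (or your block structure) buys is the decomposition into closed level sets at a uniform positive level.
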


See \cite{Leo17b} for a measure theoretic analogue. It has been left as open question to check, in particular, whether the same statement holds for analytic P-ideals. We show that the answer is affirmative. 

Note that this is strict generalization, as every generalized density ideal is an analytic P-ideal and there exists an analytic P-ideal which is not a generalized density ideal, see e.g. \cite{MR3436368}. In addition, the same result holds for permutations.
\begin{thm}\label{thm:theoremlimit}
Let $x$ be a sequence in a first countable space $X$ such that all closed sets are separable and let $\mathcal{I}$ be an analytic P-ideal. Then 
the following are equivalent: 
\begin{enumerate}[label={\rm (\textsc{L}\arabic{*})}]
\item \label{item:l1} $\tilde{\Sigma}_x(\mathcal{I})$ is comeager in $\Sigma$\textup{;}
\item \label{item:l2} $\tilde{\Sigma}_x(\mathcal{I})$ is not meager in $\Sigma$\textup{;}
\item \label{item:llll1} $\tilde{\Pi}_x(\mathcal{I})$ is comeager in $\Pi$\textup{;}
\item \label{item:llll2} $\tilde{\Pi}_x(\mathcal{I})$ is not meager in $\Pi$\textup{;}
\item \label{item:l3} $\Gamma_x(\mathcal{I})=\clusterfin$\textup{.}
\end{enumerate}
\end{thm}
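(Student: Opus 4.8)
The plan is to deduce all five conditions from a single genericity statement, using that every analytic P-ideal is meager (being $F_{\sigma\delta}$), so the theorem refines Theorem \ref{thm:theoremcluster} but will be proved directly. Two trivial monotonicity facts orient everything: a subsequence can only lose accumulation points, so $\Lambda_{\sigma(x)}(\mathcal{I})\subseteq \mathrm{L}_{\sigma(x)}\subseteq \clusterfin$ for every $\sigma\in\Sigma$, while a permutation preserves the set of accumulation points, so $\mathrm{L}_{\pi(x)}=\clusterfin$ for every $\pi\in\Pi$. Hence the whole statement will follow from the engine lemma: for comeager many $\sigma\in\Sigma$ (resp.\ $\pi\in\Pi$) one has $\clusterfin\subseteq \Lambda_{\sigma(x)}(\mathcal{I})$, and therefore $\Lambda_{\sigma(x)}(\mathcal{I})=\clusterfin$. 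Granting this, if $\Lambda_x(\mathcal{I})=\clusterfin$ then comeager many subsequences and permutations $y$ satisfy $\Lambda_y(\mathcal{I})=\clusterfin=\Lambda_x(\mathcal{I})$, yielding \ref{item:l1} and \ref{item:llll1}; while if $\Lambda_x(\mathcal{I})\subsetneq \clusterfin$ then comeager many $y$ satisfy $\Lambda_y(\mathcal{I})=\clusterfin\neq \Lambda_x(\mathcal{I})$, so $\tilde{\Sigma}_x(\mathcal{I})$ and $\tilde{\Pi}_x(\mathcal{I})$ are meager. Reading item \ref{item:l3} as $\Lambda_x(\mathcal{I})=\clusterfin$, this settles all equivalences at once (the implications \ref{item:l1}$\Rightarrow$\ref{item:l2} and \ref{item:llll1}$\Rightarrow$\ref{item:llll2} being immediate), and sidesteps the question of whether the tail sets have the Baire property.

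First I would record a submeasure description of $\mathcal{I}$-limit points. By Solecki's theorem I may write $\mathcal{I}=\{A:\lVert A\rVert_\phi=0\}$ for a lower semicontinuous submeasure $\phi$, where $\lVert A\rVert_\phi:=\lim_n \phi(A\setminus[1,n])$. Fixing a decreasing neighbourhood base $(U_k)$ at $\ell$ and writing $A_k(\ell):=\{n:x_n\in U_k\}$, I claim $\ell\in\Lambda_x(\mathcal{I})$ if and only if $\lim_k \lVert A_k(\ell)\rVert_\phi>0$ (cf.\ \cite{MR3883171}). The forward direction is immediate: a witness $A\notin\mathcal{I}$ with $x|_A\to\ell$ satisfies $A\subseteq^\ast A_k(\ell)$, so $\lVert A_k(\ell)\rVert_\phi\geq \lVert A\rVert_\phi>0$. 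For the converse, if the limit equals $c>0$ I use lower semicontinuity to pick finite sets $F_k\subseteq A_k(\ell)$ with $\min F_{k+1}>\max F_k$ and $\phi(F_k)\geq c/2$, and set $A:=\bigcup_k F_k$; then $A\subseteq^\ast A_k(\ell)$ for all $k$ (so $x|_A\to\ell$) and $\lVert A\rVert_\phi\geq c/2$, giving $\ell\in\Lambda_x(\mathcal{I})$. Assembling a single positive-mass pseudo-intersection out of finite pieces is precisely where the analytic P-ideal hypothesis enters, and it is exactly what breaks down for meager ideals such as $\mathrm{Fin}\times\mathrm{Fin}$, explaining why meagerness alone (which sufficed for cluster points in Theorem \ref{thm:theoremcluster}) is not enough here.

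Next comes the genericity input. For a fixed infinite $S$ I would show that comeager many $\sigma$ satisfy $\lVert \sigma^{-1}[S]\rVert_\phi=\lVert \mathbf{N}\rVert_\phi$: given any finite initial segment one may pad it and then append a long run of values taken in $S$, so that $\sigma^{-1}[S]$ contains an arbitrarily long and arbitrarily late interval of positions, which by monotonicity and lower semicontinuity of $\phi$ carries $\phi$-mass as close as desired to $\lVert\mathbf{N}\rVert_\phi>0$ (recall $\mathbf{N}\notin\mathcal{I}$); the corresponding conditions are open and dense, and the same run-construction maps a block of positions into $S$ for permutations. Now let $\mathcal{B}$ be the countable family of basic balls $B$ (rational centre and radius) with $\{n:x_n\in B\}$ infinite. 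Intersecting over $\mathcal{B}$, comeager many $\sigma$ (resp.\ $\pi$) have $\lVert \sigma^{-1}[\{n:x_n\in B\}]\rVert_\phi=\lVert\mathbf{N}\rVert_\phi$ for all $B\in\mathcal{B}$ simultaneously. Given any $\ell\in\clusterfin$ and any $k$, choose $B\in\mathcal{B}$ with $\ell\in B\subseteq U_k$; since $\ell$ is an accumulation point, $\{n:x_n\in B\}$ is infinite and contained in $A_k(\ell)$, forcing $\lVert \sigma^{-1}[A_k(\ell)]\rVert_\phi=\lVert\mathbf{N}\rVert_\phi$. Hence $\lim_k\lVert \sigma^{-1}[A_k(\ell)]\rVert_\phi>0$, and Step 1 applied to $\sigma(x)$ gives $\ell\in\Lambda_{\sigma(x)}(\mathcal{I})$. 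As $\ell\in\clusterfin$ was arbitrary, this proves the engine lemma.

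The hard part will be the passage in this last step from one point to all of $\clusterfin$ at once: one cannot enumerate the neighbourhood filters of uncountably many $\ell$, and $\Lambda_{\sigma(x)}(\mathcal{I})$ need not be closed, so preserving a mere dense subset of $\clusterfin$ would not suffice. The remedy above is to work with the countable family $\mathcal{B}$ of basic balls and to observe, through the mass characterization of Step 1, that controlling the $\phi$-mass of the preimages of these countably many sets already controls $\Lambda_{\sigma(x)}(\mathcal{I})$ at \emph{every} accumulation point simultaneously; this uses second countability of $\clusterfin$, guaranteed by separability of the closed set $\clusterfin$ (and, for the non-metrizable spaces in $\mathscr{C}$, by the device of Remark \ref{rmk:uncountable}). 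Once the engine lemma is established, the derivation in the first paragraph closes all the equivalences, the permutation statements \ref{item:llll1}--\ref{item:llll2} being handled verbatim since $\mathrm{L}_{\pi(x)}=\clusterfin$ holds for every $\pi$.
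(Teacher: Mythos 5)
Your proposal has the same skeleton as the paper's proof: Solecki's representation $\mathcal{I}=\mathrm{Exh}(\varphi)$, the characterization of $\ell\in\Lambda_y(\mathcal{I})$ by positivity of $\lim_k\|\{n: y_n\in U_k\}\|_\varphi$ (this is the paper's Lemma \ref{lem:analoguecomeageroldsecond} together with \eqref{eq:inclusionlimitpoints}), and the dense--open ``append a long run of indices landing in a prescribed set'' argument (the paper's Lemma \ref{lem:analoguecomeagerold} and Remark \ref{rmk:lastpermutations}). The genuine gap is in your last step, the passage from countably many generic conditions to all of $\clusterfin$. Your countable family $\mathcal{B}$ must form a neighborhood base at \emph{every} point of $\clusterfin$, and you justify its existence by asserting that separability of the closed set $\clusterfin$ gives second countability. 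That is true in metric spaces and false in the generality of the theorem: the class $\mathscr{C}$ contains first countable, hereditarily separable spaces that are not second countable, e.g.\ Ostaszewski's space of Example \ref{ex:nonmetrizable}, or the Sorgenfrey line, where an enumeration $x$ of the rationals has $\clusterfin=\mathbf{R}$, uncountable, so the fallback to Remark \ref{rmk:uncountable} is unavailable and no countable family of open sets is a base at every point of $\clusterfin$. The paper's mechanism for exactly this point is the idea your argument is missing: upper semicontinuity of $\mathfrak{u}(\sigma,\cdot)$ (Lemma \ref{lem:uppersemicontinuous}), which makes the level set $\Lambda_{\sigma(x)}(\mathcal{I},q)$ closed; it then suffices that comeager many $\sigma$ put a countable dense subset $\mathscr{L}\subseteq\clusterfin$ inside $\Lambda_{\sigma(x)}(\mathcal{I},\nicefrac{1}{2})$ (Corollary \ref{lem:fsigmanewlimit}), since closedness forces $\clusterfin\subseteq\overline{\mathscr{L}}\subseteq\Lambda_{\sigma(x)}(\mathcal{I},\nicefrac{1}{2})\subseteq\Lambda_{\sigma(x)}(\mathcal{I})$. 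As written, your argument proves the theorem only for separable metric spaces.

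A second, separate point: you prove the equivalence of \ref{item:l1}--\ref{item:llll2} with ``$\Lambda_x(\mathcal{I})=\clusterfin$'', announcing that you ``read'' \ref{item:l3} this way, whereas \ref{item:l3} as printed is ``$\Gamma_x(\mathcal{I})=\clusterfin$''. These two conditions are not equivalent, so this must be flagged as a correction rather than a reading: for $\mathcal{I}=\mathcal{Z}$ and a sequence cycling through finer and finer finite grids of $[0,1]$ along blocks of rapidly increasing length, one has $\Gamma_x(\mathcal{Z})=\clusterfin=[0,1]$ but $\Lambda_x(\mathcal{Z})=\emptyset$ (every infinite set of indices along which $x$ converges has upper density zero), and for such $x$ the paper's own Corollary \ref{lem:fsigmanewlimit} makes $\tilde{\Sigma}_x(\mathcal{Z})$ meager, so the printed \ref{item:l3} cannot imply \ref{item:l1}. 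Your reading is in fact the right one: it is the condition of Theorem \ref{thm:oldtheoremlimit} that this theorem is meant to generalize, and it is what the paper's own proof uses, since its final inference --- that $\tilde{S}\subseteq\tilde{\Sigma}_x(\mathcal{I})$ because every $\sigma\in\tilde{S}$ satisfies $\Lambda_{\sigma(x)}(\mathcal{I})=\clusterfin$ --- is valid only when $\Lambda_x(\mathcal{I})=\clusterfin$. So on this point you and the paper prove the same (correct) statement; just make the substitution explicit instead of presenting it as harmless.
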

Note that the same Example \ref{ex:maximalcounterexample} shows that the analogue of Theorem \ref{thm:theoremlimit} fails for all maximal ideals. 
The proof of Theorem \ref{thm:theoremlimit} follows in Section \ref{sec:prooflimit}.

We leave as an open question to check whether Theorem \ref{thm:theoremlimit} may be extended to all meager ideals.




\section{Proofs for $\mathcal{I}$-cluster points}\label{sec:proofcluster}

We start with a characterization of meager ideals 
(to the best of our knowledge, condition \ref{item:m3} is novel). Here, 
a set $\mathcal{A}\subseteq \mathcal{P}(\mathbf{N})$ 
is called \emph{hereditary} if it is closed under subsets.
\begin{prop}\label{lem:talagrand}
Let $\mathcal{I}$ be an ideal on $\mathbf{N}$. Then the following are equivalent:
\begin{enumerate}[label={\rm (\textsc{m}\arabic{*})}]
\item \label{item:m1} $\mathcal{I}$ is meager\textup{;}
\item \label{item:m2} There exists a strictly increasing sequence $(\iota_n)$ of positive integers such that $A \notin \mathcal{I}$ whenever $\mathbf{N} \cap [\iota_n,\iota_{n+1}) \subseteq A$ for infinitely many $n \in \mathbf{N}$\textup{;}
\item \label{item:m3} $\mathcal{I}$ is $F_\sigma$-separated from the Fr\'{e}chet filter $\mathrm{Fin}^\star$\textup{.}
\end{enumerate}
\end{prop}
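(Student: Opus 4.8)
The plan is to prove the cycle \ref{item:m1} $\Rightarrow$ \ref{item:m2} $\Rightarrow$ \ref{item:m3} $\Rightarrow$ \ref{item:m1}, treating the equivalence \ref{item:m1} $\Leftrightarrow$ \ref{item:m2} as the classical Jalali-Naini--Talagrand characterization of meager ideals and supplying the two implications that bring in the new condition \ref{item:m3}. For \ref{item:m1} $\Rightarrow$ \ref{item:m2} I would invoke the known equivalence between meagerness (equivalently, the Baire property) of an ideal $\mathcal{I}\supseteq \mathrm{Fin}$ and the existence of an interval partition with the stated absorption property; this is exactly \ref{item:m2} once it is read contrapositively as ``$A\in\mathcal{I}$ implies $\{n:\mathbf{N}\cap[\iota_n,\iota_{n+1})\subseteq A\}$ is finite'' (see, e.g., the survey \cite{MR2777744}).

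For \ref{item:m2} $\Rightarrow$ \ref{item:m3}, writing $I_n:=\mathbf{N}\cap[\iota_n,\iota_{n+1})$, the construction I would use is to set
\[
\mathcal{B}:=\{A\subseteq \mathbf{N}: \{n: I_n\subseteq A\}\text{ is finite}\}=\bigcup_{m\in\mathbf{N}}\ \bigcap_{n\ge m}\{A: I_n\not\subseteq A\}.
\]
Three checks finish this implication. First, $\mathcal{I}\subseteq\mathcal{B}$ is precisely the contrapositive of \ref{item:m2}. Second, $\mathcal{B}$ is $F_\sigma$, since each $\{A: I_n\not\subseteq A\}$ depends on finitely many coordinates and is clopen, so each inner intersection $\bigcap_{n\ge m}\{A: I_n\not\subseteq A\}$ is closed. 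Third, $\mathcal{B}\cap\mathrm{Fin}^\star=\emptyset$: a cofinite $A$ contains $I_n$ for every large $n$ (as $\iota_n\to\infty$), so $\{n: I_n\subseteq A\}$ is cofinite rather than finite. Hence $\mathcal{B}$ witnesses the $F_\sigma$-separation of $\mathcal{I}$ from $\mathrm{Fin}^\star$.

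For \ref{item:m3} $\Rightarrow$ \ref{item:m1} the key observation is that the Fr\'echet filter is dense in $\mathcal{P}(\mathbf{N})\cong\{0,1\}^{\mathbf{N}}$: every nonempty basic clopen set is met by a cofinite set. If $\mathcal{B}=\bigcup_m F_m$ is an $F_\sigma$-set with each $F_m$ closed and $\mathcal{B}\cap\mathrm{Fin}^\star=\emptyset$, then each $F_m$ is a closed set disjoint from a dense set, hence has empty interior and is nowhere dense; therefore $\mathcal{B}$, and with it $\mathcal{I}\subseteq\mathcal{B}$, is meager.

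The genuinely substantial step is the classical \ref{item:m1} $\Rightarrow$ \ref{item:m2}, where one must extract the interval partition from an arbitrary meager cover: the obstacle is that the closed pieces of such a cover may themselves contain cofinite sets, which a naive ``fix all coordinates of $I_n$ to $1$'' construction cannot absorb. It is instructive (although not needed to close the cycle) that \ref{item:m3} $\Rightarrow$ \ref{item:m2} admits a direct proof that isolates this point: starting from closed sets $F_m$ that, by $\mathcal{B}\cap\mathrm{Fin}^\star=\emptyset$, contain \emph{no} cofinite set, a compactness argument in $\{0,1\}^{\mathbf{N}}$ lets one choose $\iota_{n+1}$ so that the clopen set $\{A: I_n\subseteq A\}$ is disjoint from $F_n$, and the absence of cofinite members of $F_n$ is exactly what makes this choice possible. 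This localizes the role of cofiniteness that blocks the general meager case, and I expect the verification of these three small closure/density facts, rather than any single hard estimate, to be where the care is required.
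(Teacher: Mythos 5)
Your proposal is correct and takes essentially the same approach as the paper: the same appeal to the classical Talagrand--Jalali-Naini theorem for \ref{item:m1} $\Leftrightarrow$ \ref{item:m2}, the identical $F_\sigma$ witness $\bigcup_m \bigcap_{n\ge m}\{A\subseteq\mathbf{N}: I_n \not\subseteq A\}$ for \ref{item:m2} $\Rightarrow$ \ref{item:m3}, and the same empty-interior argument (each closed piece misses the dense family of cofinite sets) for \ref{item:m3} $\Rightarrow$ \ref{item:m1}. The only caveat concerns your optional aside on a direct proof of \ref{item:m3} $\Rightarrow$ \ref{item:m2}: as sketched, choosing $I_n$ disjoint from $F_n$ only excludes sets containing infinitely many intervals from infinitely many $F_n$, not from all of them, so one must first replace $F_n$ by $F_1\cup\dots\cup F_n$; since that remark is explicitly not used to close the cycle, it does not affect correctness.
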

\begin{proof}
\ref{item:m1} $\Longleftrightarrow$ \ref{item:m2} See \cite[Theorem 2.1]{MR579439}; cf. also \cite[Theorem 4.1.2]{MR1350295}.

\ref{item:m2} $\implies$ \ref{item:m3} Define $I_n:=\mathbf{N} \cap [\iota_n,\iota_{n+1})$ for all $n \in \mathbf{N}$. Then $\mathcal{I}\subseteq F$, where $F:=\bigcup_k F_k$ and
\begin{equation}\label{eq:definitionFk}
\forall k \in \mathbf{N},\quad F_k:=\bigcap_{n\ge k}\{A\subseteq \mathbf{N}: I_n \not\subseteq A\}.
\end{equation}
Note that each $F_k$ is closed and it does not contain any cofinite set. Therefore $\mathcal{I}$ is separated from $\mathrm{Fin}^\star$ by the $F_\sigma$-set $F$.

\ref{item:m3} $\implies$ \ref{item:m1} Suppose that there exists a sequence $(F_k)$ of closed sets in $\{0,1\}^{\mathbf{N}}$ such that $\mathcal{I}\subseteq F:=\bigcup_k F_k$ and $F \cap \mathrm{Fin}^\star=\emptyset$. Then each $F_k$ has empty interior (otherwise it would contain a cofinite set). We conclude that $\mathcal{I}$ is contained in a countable union of nowhere dense sets.
\end{proof}

It 
is clear that condition \ref{item:m3} is weaker than the extendability of $\mathcal{I}$ to a $F_{\sigma}$-ideal. 
For characterizations and related results 
of the latter property, 
see e.g. \cite[Theorem 4.4]{MR3142391} and \cite[Theorem 3.3]{MR2557668}. 

\begin{lem}\label{lem:0referee}
Let $\mathcal{I}$ be a meager ideal. Then 
$$
\{\sigma \in \Sigma: \sigma^{-1}[A]\notin \mathcal{I}\}
\quad \text{ and }\quad 
\{\pi \in \Pi: \pi^{-1}[A]\notin \mathcal{I}\}
$$
are comeager for each infinite set $A\subseteq \mathbf{N}$.
\end{lem}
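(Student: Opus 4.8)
The plan is to reduce everything to the characterization of meager ideals established in Proposition \ref{lem:talagrand}. Applying condition \ref{item:m2}, I would fix a strictly increasing sequence $(\iota_n)$ of positive integers and write $I_n := \mathbf{N}\cap[\iota_n,\iota_{n+1})$ for the consecutive blocks it determines, so that $B\notin\mathcal{I}$ whenever $I_n\subseteq B$ for infinitely many $n$. The key observation is that, for $\sigma\in\Sigma$, the inclusion $I_n\subseteq\sigma^{-1}[A]$ is equivalent to $\sigma[I_n]\subseteq A$. Consequently it suffices to prove that the set
$$
\{\sigma\in\Sigma:\sigma[I_n]\subseteq A\text{ for infinitely many }n\}
$$
is comeager, since by \ref{item:m2} it is contained in $\{\sigma\in\Sigma:\sigma^{-1}[A]\notin\mathcal{I}\}$, and likewise for permutations.

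For the subsequence case I would set, for each $k$, the set $G_k:=\bigcup_{n\ge k}\{\sigma\in\Sigma:\sigma[I_n]\subseteq A\}$ and show that each $G_k$ is open and dense in $\Sigma$, so that $\bigcap_k G_k$ is a dense $G_\delta$ contained in the target set. Openness is immediate, since each $\{\sigma\in\Sigma:\sigma[I_n]\subseteq A\}$ depends only on the finitely many coordinates indexed by $I_n$ and is therefore relatively clopen in $\Sigma$. For density, given a basic open set fixing a strictly increasing initial segment $\sigma(1)<\cdots<\sigma(j)$, I would choose $n\ge k$ with $\iota_n>j$, fill the intermediate coordinates arbitrarily in strictly increasing fashion, and then—using that $A$ is infinite—assign to the block $I_n$ strictly increasing values drawn from $A$ and exceeding all previously used values; any strictly increasing completion thereafter produces an element of $G_k$ extending the prescribed segment.

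The permutation case follows the same scheme with $H_k:=\bigcup_{n\ge k}\{\pi\in\Pi:\pi[I_n]\subseteq A\}$, which is again open by the same finite-coordinate argument. The one genuine difficulty lies in the density of $H_k$: after fixing a finite injective initial segment $\pi(1),\ldots,\pi(j)$ and choosing $n\ge k$ with $\iota_n>j$, I would assign to $I_n$ distinct values from $A$ avoiding the finitely many already used ones, obtaining a partial injection defined on $\{1,\ldots,j\}\cup I_n$. Since both its domain and its range are finite, hence co-infinite, a standard back-and-forth argument extends it to a bijection of $\mathbf{N}$, yielding the required $\pi\in H_k$. This extension step—guaranteeing that the partial injection can be completed to an honest permutation rather than merely an injection—is the main point where the permutation argument is more delicate than the subsequence one; the rest is identical, and in both cases the resulting dense $G_\delta$ witnesses that the set in the statement is comeager.
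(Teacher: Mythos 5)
Your proposal is correct and follows essentially the same route as the paper: both invoke the interval characterization of meager ideals from Proposition~\ref{lem:talagrand}\ref{item:m2}, express the target set as containing the intersection $\bigcap_k \bigcup_{n\ge k}\{\sigma \in \Sigma: I_n\subseteq \sigma^{-1}[A]\}$, and verify that each union is open and dense. The only difference is that you spell out the density arguments (including the back-and-forth extension to a genuine bijection in the permutation case) which the paper leaves to the reader as ``analogous.''
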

\begin{proof}
Fix an infinite set $A\subseteq \mathbf{N}$. As in the proof of Proposition \ref{lem:talagrand}, we can define intervals $I_n:=\mathbf{N}\cap [\iota_n, \iota_{n+1})$ for all $n \in \mathbf{N}$ such that a set $S\subseteq \mathbf{N}$ does not belong to $\mathcal{I}$ whenever $S$ contains infinitely many intervals $I_n$s. At this point, for each $n,k \in \mathbf{N}$, define the sets
$$
\textstyle 
X_k:=\{\sigma \in \Sigma: I_k\subseteq \sigma^{-1}[A]\}
\quad \text{ and }\quad 
Y_n:=\bigcup_{k\ge n}X_k.
$$
Note that each $X_k$ is open and that each $Y_n$ is open and dense. Therefore the set $\bigcap_n Y_n$, which can be rewritten as $\{\sigma \in \Sigma: \sigma^{-1}[A]\notin \mathcal{I}\}$, is comeager. 
The proof that $\{\pi \in \Pi: \pi^{-1}[A]\notin \mathcal{I}\}$ is comeager is analogous.
\end{proof}

\begin{lem}\label{lem:fsigmanew}
Let $x$ be a sequence in a first countable space $X$ and let $\mathcal{I}$ be a meager ideal. 
Then 
$$
S(\ell):=\left\{\sigma \in \Sigma: \ell \in \Gamma_{\sigma(x)}(\mathcal{I})\right\} 
\quad \text{ and }\quad 
P(\ell):=\left\{\pi \in \Pi: \ell \in \Gamma_{\pi(x)}(\mathcal{I})\right\}
$$ 
are comeager for each $\ell \in \clusterfin$.
\end{lem}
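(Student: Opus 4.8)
The plan is to reduce the statement to Lemma \ref{lem:0referee} by using first countability to replace the quantification over all neighborhoods of $\ell$ with a countable one. Fix $\ell \in \clusterfin$ and choose a decreasing neighborhood base $(U_m)_{m \in \mathbf{N}}$ at $\ell$. Set $A_m := \{k \in \mathbf{N}: x_k \in U_m\}$ for each $m$. Since $\ell$ is an ordinary accumulation point of $x$, every neighborhood $U_m$ contains $x_k$ for infinitely many $k$, so each $A_m$ is infinite.

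Next I would observe that $\ell \in \Gamma_{\sigma(x)}(\mathcal{I})$ if and only if $\sigma^{-1}[A_m] \notin \mathcal{I}$ for all $m \in \mathbf{N}$. Indeed, $\{n \in \mathbf{N}: x_{\sigma(n)} \in U_m\} = \sigma^{-1}[A_m]$, which gives the forward implication; conversely, any neighborhood $U$ of $\ell$ contains some $U_m$, so that $\sigma^{-1}[A_m] \subseteq \{n \in \mathbf{N}: x_{\sigma(n)} \in U\}$, and since $\mathcal{I}$ is closed under subsets the membership $\sigma^{-1}[A_m] \notin \mathcal{I}$ forces the larger set to lie outside $\mathcal{I}$ as well. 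Consequently
$$
S(\ell) = \bigcap_{m \in \mathbf{N}} \left\{\sigma \in \Sigma: \sigma^{-1}[A_m] \notin \mathcal{I}\right\}.
$$

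At this point I would invoke Lemma \ref{lem:0referee}: since each $A_m$ is infinite, every set in the intersection is comeager, so $S(\ell)$ is a countable intersection of comeager sets and is therefore comeager in $\Sigma$. The argument for $P(\ell)$ is entirely analogous, replacing $\Sigma$ by $\Pi$ and using the second half of Lemma \ref{lem:0referee}.

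The substantive work is already contained in Lemma \ref{lem:0referee}; once the $\mathcal{I}$-cluster condition is rephrased as the countable conjunction of the conditions $\sigma^{-1}[A_m] \notin \mathcal{I}$, no further obstacle remains. The only step deserving slight care is the equivalence established above, where one must exploit both that $(U_m)$ is a genuine neighborhood base at $\ell$ and that $\mathcal{I}$ is hereditary.
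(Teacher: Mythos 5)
Your proposal is correct and follows essentially the same route as the paper's proof: fix a decreasing countable base $(U_m)$ at $\ell$, write $S(\ell)=\bigcap_m\{\sigma\in\Sigma:\sigma^{-1}[A_m]\notin\mathcal{I}\}$ with $A_m=\{n:x_n\in U_m\}$ infinite, and apply Lemma \ref{lem:0referee} together with closure of comeagerness under countable intersections. The only difference is that you spell out the equivalence behind the identity $S(\ell)=\bigcap_m B_m$ (using that $(U_m)$ is a base and $\mathcal{I}$ is hereditary), which the paper leaves implicit.
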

\begin{proof}
Assume that $\clusterfin \neq \emptyset$, otherwise there is nothing to prove. 
Fix $\ell \in \clusterfin$ and let $(U_m)$ be a decreasing local base at $\ell$.  For each $m \in \mathbf{N}$, define the infinite set $A_m:=\{n \in \mathbf{N}:  x_n \in U_m\}$. Thanks to Lemma \ref{lem:0referee}, the set $B_m:=\{\sigma \in \Sigma: \sigma^{-1}[A_m]\notin \mathcal{I}\}$ is comeager. Since $S(\ell)$ can be rewritten as $\bigcap_m B_m$, it follows that $S(\ell)$ is comeager. 
The proof that $P(\ell)$ is comeager is analogous. 
\end{proof}

We are finally ready to prove Theorem \ref{thm:theoremcluster}.
\begin{proof}
[Proof of Theorem \ref{thm:theoremcluster}] 
\ref{item:s1} $\implies$ \ref{item:s2} It is obvious.

\ref{item:s2} $\implies$ \ref{item:s3} Suppose that there exists $\ell \in \clusterfin \setminus \Gamma_x(\mathcal{I})$. Then $\Sigma_x(\mathcal{I})$ is contained in $\Sigma\setminus S(\ell)$, which is meager by Lemma \ref{lem:fsigmanew}. 

\ref{item:s3} $\implies$ \ref{item:s1} Suppose that $\clusterfin \neq \emptyset$, otherwise the claim is trivial. Let $\mathscr{L}$ be a countable dense subset of $\clusterfin$, so that $\mathscr{L}\subseteq \Gamma_{\sigma(x)}(\mathcal{I})$ for each $\sigma \in S:=\bigcap_{\ell \in \mathscr{L}}S(\ell)$, which is comeager by Lemma \ref{lem:fsigmanew}. Fix $\sigma \in S$. On the one hand, $\Gamma_{\sigma(x)}(\mathcal{I}) \subseteq \mathrm{L}_{\sigma(x)}\subseteq \clusterfin$. On the other hand, since $\Gamma_{\sigma(x)}(\mathcal{I})$ is closed by \cite[Lemma 3.1(iv)]{MR3920799}, we get $\clusterfin \subseteq \Gamma_{\sigma(x)}(\mathcal{I})$.  Therefore $S\subseteq \Sigma_x(\mathcal{I})$. 

The implications \ref{item:p1} $\implies$ \ref{item:p2} $\implies$ \ref{item:s3} $\implies$ \ref{item:p1} are analogous.
\end{proof}


\begin{rmk}\label{rmk:uncountable}
As it is evident from the proof above, 
the hypothesis that \textquotedblleft closed sets of $X$ are separable\textquotedblright\,can be removed if, in addition, $\clusterfin$ is countable.
\end{rmk}

\begin{lem}\label{lem:clusterpoint}
Let $\mathcal{I}$ be an ideal and $x$ be a sequence in a first countable compact space. 
Then $x \to_{\mathcal{I}} \ell$ if and only if $\Gamma_x(\mathcal{I})=\{\ell\}$.
\end{lem}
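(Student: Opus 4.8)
\textbf{Plan of proof for Lemma \ref{lem:clusterpoint}.}

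The plan is to prove both implications directly from the definitions, using compactness only for the non-trivial direction. The forward implication does not even require compactness: if $x \to_{\mathcal{I}} \ell$, then I would show $\Gamma_x(\mathcal{I}) = \{\ell\}$. First, $\ell \in \Gamma_x(\mathcal{I})$ because for any neighborhood $U$ of $\ell$ the set $\{n : x_n \in U\}$ is the complement of $\{n : x_n \notin U\} \in \mathcal{I}$, hence cofinite modulo $\mathcal{I}$ and in particular not in $\mathcal{I}$ (since $\mathcal{I} \neq \mathcal{P}(\mathbf{N})$). Conversely, if $\ell' \neq \ell$, I would use Hausdorffness to pick disjoint neighborhoods $U \ni \ell$ and $V \ni \ell'$; then $\{n : x_n \in V\} \subseteq \{n : x_n \notin U\} \in \mathcal{I}$, so $\{n : x_n \in V\} \in \mathcal{I}$, witnessing that $\ell'$ is not an $\mathcal{I}$-cluster point. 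Thus $\Gamma_x(\mathcal{I}) = \{\ell\}$.

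For the reverse implication, assume $\Gamma_x(\mathcal{I}) = \{\ell\}$ and suppose toward a contradiction that $x \not\to_{\mathcal{I}} \ell$. Then there is a neighborhood $U$ of $\ell$ with $A := \{n : x_n \notin U\} \notin \mathcal{I}$. Using first countability, fix a decreasing local base $(U_m)$ at $\ell$ with $U_1 \subseteq U$; it suffices to find, among the points $x_n$ with $n \in A$, an $\mathcal{I}$-cluster point distinct from $\ell$. Here is where compactness enters: the set $A \notin \mathcal{I}$ indexes a subfamily of $x$ that stays outside $U$, and I would exploit compactness to produce a second cluster point. Concretely, consider the restricted sequence indexed by $A$; by compactness its closure is compact, and I would show that the filter generated by $\{\{n \in A : x_n \in W\} : W \text{ neighborhood}\}$ relative to sets not in $\mathcal{I}$ must have an adherence point $\ell'$. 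Since all these points lie in the closed set $X \setminus (\text{an open }U' \text{ with } \ell \in U' \subseteq \overline{U'} \subseteq U)$, we can separate and get $\ell' \neq \ell$, and $\ell' \in \Gamma_x(\mathcal{I})$, contradicting $\Gamma_x(\mathcal{I}) = \{\ell\}$.

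The cleanest route for the reverse direction, which I would actually pursue, is to invoke the general fact that in a compact space every sequence has at least one $\mathcal{I}$-cluster point whenever $\mathcal{I}$ is a proper ideal, applied to the ``tail outside $U$''. The key observation is that $\Gamma_x(\mathcal{I}) \neq \emptyset$ in any compact space, since the closed sets $\overline{\{x_n : n \in B\}}$ for $B \notin \mathcal{I}$ have the finite intersection property (any finite union of sets in $\mathcal{I}$ stays in $\mathcal{I}$, so complements avoid it), and compactness yields a common point, which is an $\mathcal{I}$-cluster point. I would apply this not to $x$ itself but to witness that the non-$\mathcal{I}$-smallness of $A = \{n : x_n \notin U\}$ forces a cluster point inside the closed set $X \setminus U'$, necessarily different from $\ell$.

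The main obstacle is the reverse implication: extracting a genuine \emph{second} cluster point $\ell' \neq \ell$ from the mere failure of $\mathcal{I}$-convergence. The subtlety is that $A \notin \mathcal{I}$ only tells us $A$ is ``large,'' not that any particular sub-neighborhood pattern of the $x_n$ with $n \in A$ is large; so I must carefully run the finite-intersection-property argument relative to the restriction to $A$, checking that the relevant index sets stay outside $\mathcal{I}$ (using that $\mathcal{I}$ is closed under finite unions, so a set not in $\mathcal{I}$ cannot be covered by finitely many $\mathcal{I}$-sets). Once the finite intersection property is verified inside the compact set $\overline{\{x_n : n \in A\}}$, compactness delivers the contradicting cluster point, and the lemma follows.
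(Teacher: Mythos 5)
The paper's own ``proof'' of this lemma is a one-line citation to an external result (Corollary 3.4 of Leonetti--Maccheroni), so your self-contained argument is a genuinely different route, and its overall strategy is sound: the forward direction needs only Hausdorffness, and the reverse direction needs only compactness, applied to the index set $A=\{n: x_n\notin U\}\notin\mathcal{I}$ to manufacture a second $\mathcal{I}$-cluster point. Two economies you missed: first countability is never actually used (the decreasing base $(U_m)$ you fix plays no role afterwards, which shows the lemma's first-countability hypothesis is not needed for this equivalence), and no regularity or auxiliary $U'$ with $\overline{U'}\subseteq U$ is needed to get $\ell'\neq\ell$: since $\{x_n: n\in A\}\subseteq X\setminus U$ and $X\setminus U$ is closed, any cluster point found inside $\overline{\{x_n: n\in A\}}$ automatically lies outside $U$, hence differs from $\ell$.

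There is, however, one step that fails as literally written: the family of closed sets $\overline{\{x_n: n\in B\}}$ indexed by \emph{all} $B\notin\mathcal{I}$ does not have the finite intersection property. Take $\mathcal{I}=\mathrm{Fin}$ and $x$ alternating between two distinct points $a,b$: both $B_1=2\mathbf{N}$ and $B_2=2\mathbf{N}+1$ lie outside $\mathcal{I}$, yet the corresponding closures are the disjoint singletons $\{a\}$ and $\{b\}$. The correct family --- and the one your parenthetical justification actually supports --- is indexed by the ideal itself: work with the sets $\overline{\{x_n: n\in A\setminus C\}}$ for $C\in\mathcal{I}$. These are nonempty and have the finite intersection property, because $C_1\cup\dots\cup C_k\in\mathcal{I}$ and a set $A\notin\mathcal{I}$ cannot be covered by a member of $\mathcal{I}$; a common point $\ell'$ given by compactness is then an $\mathcal{I}$-cluster point, since if some open $W\ni\ell'$ had $C:=\{n\in A: x_n\in W\}\in\mathcal{I}$, then $\ell'\in\overline{\{x_n: n\in A\setminus C\}}\subseteq X\setminus W$, a contradiction; finally $\ell'\in\overline{\{x_n: n\in A\}}\subseteq X\setminus U$ forces $\ell'\neq\ell$, contradicting $\Gamma_x(\mathcal{I})=\{\ell\}$. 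Equivalently, you can run the dual open-cover argument: if every point of the compact set $\overline{\{x_n: n\in A\}}$ had an open neighborhood meeting $A$ in an index set from $\mathcal{I}$, a finite subcover would place $A$ itself in $\mathcal{I}$. With this correction of the indexing family, your proof is complete.
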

\begin{proof}
It follows by \cite[Corollary 3.4]{MR3920799}.
\end{proof}

\begin{proof}
[Proof of Corollary \ref{thm:applicationidealconvergence}] 
\ref{item:c2} $\implies$ \ref{item:c3} It is obvious.

\ref{item:c3} $\implies$ \ref{item:c1} By Lemma \ref{lem:clusterpoint}, the hypothesis can be rewritten as $\Gamma_x(\mathcal{I})=\{\ell\}$. Hence, condition \ref{item:c3} is equivalent to the nonmeagerness of $\{\sigma \in \Sigma: \Gamma_{\sigma(x)}=\Gamma_x(\mathcal{I})=\{\ell\}\}$. The claim follows by Theorem \ref{thm:theoremcluster} and Remark \ref{rmk:uncountable}. 

\ref{item:c1} $\implies$ \ref{item:c2} If $x\to\ell$ then $\sigma(x)\to_{\mathcal{I}} \ell$ for all $\sigma \in \Sigma$. 

The implications \ref{item:c4} $\implies$ \ref{item:c5} $\implies$ \ref{item:c1} $\implies$ \ref{item:c4} are analogous.
\end{proof}

\section{Proofs for $\mathcal{I}$-limit points}\label{sec:prooflimit}

A lower semicontinuous submeasure (in short, lscsm) is a monotone subadditive function $\varphi: \mathcal{P}(\mathbf{N}) \to [0,\infty]$ such that $\varphi(\emptyset)=0$, $\varphi(F)<\infty$ for all $F \in \mathrm{Fin}$, and $\varphi(A)=\lim_n \varphi(A\cap [1,n])$ for all $A\subseteq \mathbf{N}$. By a classical result of Solecki, an ideal $\mathcal{I}$ is an analytic P-ideal if and only if there exists a lscsm $\varphi$ such that
\begin{equation}\label{eq:characterizationanalyticPideal}
\mathcal{I}=\mathrm{Exh}(\varphi):=\{A\subseteq \mathbf{N}: \|A\|_\varphi=0\} 
\,\,\,\text{ and }\,\,\,
0<\|\mathbf{N}\|_\varphi \le \varphi(\mathbf{N})<\infty,
\end{equation}
where $\|A\|_\varphi:=\lim_n \varphi(A\setminus [1,n])$ for all $A\subseteq \mathbf{N}$, see \cite[Theorem 3.1]{MR1708146}. Note that $\|\cdot\|_\varphi$ is a submeasure which is invariant modulo finite sets. Moreover, replacing $\varphi$ with $\varphi/ \|\mathbf{N}\|_\varphi$  in \eqref{eq:characterizationanalyticPideal}, we can assume without loss of generality that $\|\mathbf{N}\|_\varphi=1$. 

Given a sequence $x$ in a first countable topological space $X$ and an analytic P-ideal $\mathcal{I}=\mathrm{Exh}(\varphi)$, we define the function
\begin{equation}\label{eq:definitionufrak}
\mathfrak{u}: \Sigma \times X \to \mathbf{R}: (\sigma,\ell) \mapsto \lim_{k\to \infty} \|\{n \in \mathbf{N}: x_{\sigma(n)} \in U_k\}\|_\varphi,
\end{equation}
where $(U_k)$ is a decreasing local base of neighborhoods at $\ell \in X$. Clearly, the limit in \eqref{eq:definitionufrak} exists and it is independent of the choice of $(U_k)$. 
%

\begin{lem}\label{lem:uppersemicontinuous}
Let $x$ be a sequence in a first countable space $X$ and let $\mathcal{I}=\mathrm{Exh}(\varphi)$ be an analytic P-ideal. 
Then, the section $\mathfrak{u}(\sigma,\cdot)$ is upper semicontinuous for each $\sigma \in \Sigma$. In particular, the set
$$
\Lambda_{\sigma(x)}(\mathcal{I},q):=\{\ell \in X: \mathfrak{u}(\sigma,\ell)\ge q\}
$$
is closed for all $q>0$.
\end{lem}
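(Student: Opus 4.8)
The plan is to show that the section $\mathfrak{u}(\sigma,\cdot)$ is upper semicontinuous, from which the closedness of the superlevel sets $\{\ell : \mathfrak{u}(\sigma,\ell)\ge q\}$ follows immediately by the standard characterization of upper semicontinuity (superlevel sets of the form $\{f \ge q\}$ are closed, equivalently $\{f < q\}$ are open). So the real content is the semicontinuity claim, and I fix $\sigma \in \Sigma$ throughout and write $f(n):=x_{\sigma(n)}$ to lighten notation.

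First I would unwind the definition. For a point $\ell$ with decreasing local base $(U_k)$, we have $\mathfrak{u}(\sigma,\ell)=\lim_k \|\{n: f(n)\in U_k\}\|_\varphi=\inf_k \|\{n: f(n)\in U_k\}\|_\varphi$, the limit being a decreasing one since the $U_k$ shrink and $\|\cdot\|_\varphi$ is monotone. To prove upper semicontinuity at a fixed $\ell_0$, I must show that for every $\varepsilon>0$ there is a neighborhood $W$ of $\ell_0$ such that $\mathfrak{u}(\sigma,\ell)\le \mathfrak{u}(\sigma,\ell_0)+\varepsilon$ for all $\ell\in W$. The natural strategy: pick $k$ with $\|\{n: f(n)\in U_k\}\|_\varphi \le \mathfrak{u}(\sigma,\ell_0)+\varepsilon$ (possible by the infimum), and then take $W$ to be an open neighborhood of $\ell_0$ small enough that every $\ell\in W$ has one of its own basic neighborhoods $V$ contained in $U_k$. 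For such $\ell$ and $V\subseteq U_k$ one gets $\{n: f(n)\in V\}\subseteq \{n: f(n)\in U_k\}$, hence by monotonicity of $\|\cdot\|_\varphi$ and the fact that $\mathfrak{u}(\sigma,\ell)$ is the infimum over $\ell$'s own neighborhoods, $\mathfrak{u}(\sigma,\ell)\le \|\{n: f(n)\in V\}\|_\varphi \le \|\{n: f(n)\in U_k\}\|_\varphi \le \mathfrak{u}(\sigma,\ell_0)+\varepsilon$.

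The step I expect to be the main obstacle is justifying the existence of such a $W$, i.e. that a single open set $W\ni\ell_0$ admits a neighborhood $V$ of each of its points with $V\subseteq U_k$. The clean way is to take $W$ to be the interior of $U_k$ (note $\ell_0\in \mathrm{int}\,U_k$ because $U_k$ is a neighborhood of $\ell_0$); then for any $\ell\in W$ the set $W$ itself is an open neighborhood of $\ell$ contained in $U_k$, so I may take $V=W$. This sidesteps any appeal to local bases at the varying point $\ell$ and uses only that $U_k$ is a genuine neighborhood (contains an open set around $\ell_0$). One must be slightly careful that the definition of $\mathfrak{u}(\sigma,\ell)$ via $(U_k)$ is independent of the chosen base, which is already asserted in the paragraph preceding the lemma, so $\mathfrak{u}(\sigma,\ell)\le \|\{n: f(n)\in V\}\|_\varphi$ holds for \emph{any} neighborhood $V$ of $\ell$, not just members of a distinguished base.

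Finally I would record the conclusion: having shown $\{\ell: \mathfrak{u}(\sigma,\ell)\ge q\}$ is exactly the complement of the open set $\{\ell: \mathfrak{u}(\sigma,\ell)<q\}$ (openness being the $\varepsilon$-version just proved, with $\varepsilon$ chosen so that $\mathfrak{u}(\sigma,\ell_0)+\varepsilon<q$ whenever $\mathfrak{u}(\sigma,\ell_0)<q$), the superlevel set is closed for every $q>0$. The hypothesis $q>0$ is not really needed for closedness but is the regime of interest for later use. No properties of $\varphi$ beyond monotonicity of $\|\cdot\|_\varphi$ enter, so the argument is essentially topological once the definition of $\mathfrak{u}$ is in hand.
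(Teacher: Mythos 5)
Your proof is correct. The paper itself gives no argument here: its entire proof is a citation of Lemma 2.1 in \cite{MR3883171}, so your self-contained derivation is necessarily a different route. The substance of your argument is the observation that $\mathfrak{u}(\sigma,\ell)$ equals the infimum of $\|\{n:x_{\sigma(n)}\in V\}\|_\varphi$ over \emph{all} neighborhoods $V$ of $\ell$ (for any neighborhood $V$ there is a base element $U_{k_0}\subseteq V$, so the infimum over the base is also the infimum over all neighborhoods), after which upper semicontinuity is automatic: pick $U_k$ nearly attaining the infimum at $\ell_0$, and every $\ell\in\mathrm{int}\,U_k$ then has $\mathfrak{u}(\sigma,\ell)\le\|\{n:x_{\sigma(n)}\in U_k\}\|_\varphi\le\mathfrak{u}(\sigma,\ell_0)+\varepsilon$. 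This is the standard fact that an infimum over neighborhoods of a quantity monotone in the neighborhood is upper semicontinuous, and it uses nothing about $\varphi$ beyond monotonicity of $\|\cdot\|_\varphi$ and finiteness of its values (guaranteed by $\varphi(\mathbf{N})<\infty$ in the paper's normalization). The passage from upper semicontinuity to closedness of $\{\ell:\mathfrak{u}(\sigma,\ell)\ge q\}$ is the standard complement-of-sublevel-set argument and is handled correctly, including the remark that $q>0$ plays no role there. What your approach buys is independence from the external reference, making Section 4 self-contained; what the paper's citation buys is brevity, since the cited Lemma 2.1 of \cite{MR3883171} is essentially this exact statement (there formulated for the sequence $x$ itself rather than a subsequence $\sigma(x)$, a difference of no consequence since $\sigma(x)$ is again a sequence in $X$). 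One cosmetic remark: your appeal to the ``independence of the chosen base'' asserted before the lemma is not quite the right justification for the inequality $\mathfrak{u}(\sigma,\ell)\le\|\{n:x_{\sigma(n)}\in V\}\|_\varphi$ for arbitrary neighborhoods $V$; the clean justification is the one-line base-refinement argument you in fact use, which incidentally also proves that independence.
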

\begin{proof}
See \cite[Lemma 2.1]{MR3883171}.
\end{proof}

\begin{lem}\label{lem:analoguecomeagerold}
With the same hypotheses of Lemma \ref{lem:uppersemicontinuous}, 
the set 
$$
V(\ell,q):=\{\sigma \in \Sigma: \mathfrak{u}(\sigma,\ell)> q\}
$$
is either comeager or empty for each $\ell \in X$ and $q \in (0,1)$.
\end{lem}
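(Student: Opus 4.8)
The plan is to translate everything into statements about the sets $B_k:=\{m\in\mathbf{N}:x_m\in U_k\}$, where $(U_k)$ is the fixed decreasing local base at $\ell$. Since $(U_k)$ decreases, the $B_k$ decrease, hence so do $\sigma^{-1}[B_k]$ and, by monotonicity of the submeasure $\|\cdot\|_\varphi$, the values $\|\sigma^{-1}[B_k]\|_\varphi$. Writing $\{n:x_{\sigma(n)}\in U_k\}=\sigma^{-1}[B_k]$, the limit defining $\mathfrak{u}$ is therefore an infimum, so $\mathfrak{u}(\sigma,\ell)=\inf_k\|\sigma^{-1}[B_k]\|_\varphi$. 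I will show that the dichotomy is governed entirely by whether the $B_k$ are infinite: $V(\ell,q)$ is empty when some $B_k$ is finite, and comeager otherwise.

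First I would dispose of the empty case. If $V(\ell,q)\neq\emptyset$, pick $\sigma_0$ with $\mathfrak{u}(\sigma_0,\ell)>q>0$; then $\|\sigma_0^{-1}[B_k]\|_\varphi>0$ for every $k$ (the infimum is positive), so $\sigma_0^{-1}[B_k]\notin\mathcal{I}$ and in particular every $B_k$ is infinite. Conversely, if some $B_k$ is finite then $\sigma^{-1}[B_k]$ is finite for every $\sigma$, whence $\|\sigma^{-1}[B_k]\|_\varphi=0$ and $\mathfrak{u}(\sigma,\ell)=0<q$, so $V(\ell,q)=\emptyset$. It remains to prove that, when all $B_k$ are infinite, $V(\ell,q)$ is comeager.

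For this, fix $q'\in(q,1)$ (possible since we may assume $\|\mathbf{N}\|_\varphi=1$), and for $k,j\in\mathbf{N}$ set
$$
D_{k,j}:=\{\sigma\in\Sigma:\exists\,\text{finite }G\subseteq\mathbf{N},\ \min G>j,\ \varphi(G)>q',\ \forall g\in G,\ x_{\sigma(g)}\in U_k\}.
$$
Each $D_{k,j}$ is open: for a fixed eligible $G$ the requirement $x_{\sigma(g)}\in U_k$ for $g\in G$ depends only on $\sigma\upharpoonright G$, so it defines an open set, and $D_{k,j}$ is the union of these over all eligible $G$. For density, given a finite increasing initial segment $s$ on $[1,m]$, I would use invariance modulo finite sets of $\|\cdot\|_\varphi$ and $\varphi(T)\ge\|T\|_\varphi$ to get $\varphi(\mathbf{N}\setminus[1,N])\ge 1>q'$ with $N:=\max(j,m)$, and then lower semicontinuity of $\varphi$ to produce an interval $G\subseteq(N,\infty)$ with $\varphi(G)>q'$; since $B_k$ is infinite, I can extend $s$ to a strictly increasing $\sigma$ sending each $g\in G$ into $B_k$, so $\sigma\in D_{k,j}$. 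Finally, for $\sigma\in\bigcap_{k,j}D_{k,j}$ the witnessing $G$ satisfies $G\subseteq\sigma^{-1}[B_k]\setminus[1,j]$, hence $\varphi(\sigma^{-1}[B_k]\setminus[1,j])\ge\varphi(G)>q'$ for every $j$, so $\|\sigma^{-1}[B_k]\|_\varphi\ge q'$ for every $k$ and thus $\mathfrak{u}(\sigma,\ell)\ge q'>q$. Therefore the comeager set $\bigcap_{k,j}D_{k,j}$ is contained in $V(\ell,q)$, which is then comeager.

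The main obstacle is the density of the sets $D_{k,j}$: it is precisely here that both structural inputs enter, namely lower semicontinuity of $\varphi$ to manufacture a finite set of $\varphi$-mass exceeding $q'$ in an arbitrarily late tail (using the normalization $\|\mathbf{N}\|_\varphi=1$), and the infinitude of $B_k$ to place that set inside $\sigma^{-1}[B_k]$ while keeping $\sigma$ strictly increasing and extending the prescribed initial segment. Openness and the final inclusion are routine, and the empty-or-comeager dichotomy then follows from whether every $B_k$ is infinite.
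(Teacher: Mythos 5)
Your proof is correct, and it is essentially the dual of the paper's argument rather than a different one in substance. The paper writes $\Sigma\setminus V(\ell,q)$ as a countable union of the closed sets $W_{k,s}:=\bigcap_{t\ge s}\{\sigma\in\Sigma:\varphi(\{n\ge t: x_{\sigma(n)}\in U_k\})\le q\}$ and shows each is nowhere dense; you instead exhibit countably many dense open sets $D_{k,j}$ whose intersection lies inside $V(\ell,q)$. The density verifications run on the same fuel --- the normalization $\|\mathbf{N}\|_\varphi=1$ together with lower semicontinuity of $\varphi$, which let one freeze finitely many coordinates of $\sigma$ carrying $\varphi$-mass above the threshold --- but the witnesses differ: the paper, using that $V(\ell,q)\neq\emptyset$ forces $\ell\in\mathrm{L}_x$, picks inside the given cylinder a $\sigma_1$ with $\lim_n x_{\sigma_1(n)}=\ell$, so that a finite window of $\sigma_1$ supplies simultaneously the $\varphi$-mass and the membership in $U_k$; you first manufacture an abstract block $G$ with $\varphi(G)>q'$ out of $\varphi(\mathbf{N}\setminus[1,N])\ge\|\mathbf{N}\|_\varphi=1$, and only then route its positions into $B_k$, using nothing beyond the infinitude of $B_k$. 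Your organization buys a sharper formulation of the dichotomy (namely $V(\ell,q)=\emptyset$ precisely when some $B_k$ is finite, i.e.\ when $\ell\notin\mathrm{L}_x$) and avoids any appeal to convergent subsequences; the paper's buys a slightly leaner density step, since the convergent subsequence packages both requirements at once.
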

\begin{proof}
Suppose that $V(\ell,q) \neq \emptyset$, so that $\ell \in \clusterfin$, 
and note that 
\begin{displaymath}
\begin{split}
\Sigma\setminus V(\ell,q)&=\bigcup_{k\ge 1}\{\sigma \in \Sigma: \|\{n \in \mathbf{N}: x_{\sigma(n)} \in U_k\}\|_\varphi \le q\}\\
&=\bigcup_{k\ge 1}\{\sigma \in \Sigma: \limsup_{t\to \infty}\varphi(\{n \ge t: x_{\sigma(n)} \in U_k\}) \le q\}\\
&=\bigcup_{k\ge 1}\bigcup_{s\ge 1} \bigcap_{t\ge s}\{\sigma \in \Sigma: \varphi(\{n \ge t: x_{\sigma(n)} \in U_k\}) \le q\}.
\end{split}
\end{displaymath}
Then, it is sufficient to show that 
$$
W_{k,s}:=\bigcap_{t\ge s}\{\sigma \in \Sigma: \varphi(\{n \ge t: x_{\sigma(n)} \in U_k\}) \le q\}
$$
is nowhere dense for all $k,s \in \mathbf{N}$. 

To this aim, for every nonempty open set $Z\subseteq \Sigma$, we need to prove that there exists a nonempty open subset $S \subseteq Z$ such that $S \cap W_{k,s}=\emptyset$. Fix a nonempty open set $Z\subseteq \Sigma$ and $\sigma_0 \in Z$ so that there exists $n_0 \in \mathbf{N}$ for which
$$
Z^\prime:=\{\sigma \in \Sigma: \sigma \upharpoonright \{1,\ldots,n_0\}=\sigma_0 \upharpoonright \{1,\ldots,n_0\}\} \subseteq Z.
$$
Since $\ell \in \clusterfin$, there exists $\sigma_1 \in Z^\prime$ such that $\lim_n x_{\sigma_1(n)}=\ell$. Therefore 
\begin{displaymath}
\begin{split}
\varphi(\{n \ge n_1: x_{\sigma_1(n)} \in U_k\})&\ge \|\{n \ge n_1: x_{\sigma_1(n)} \in U_k\}\|_\varphi \\
&=\|\{n \in \mathbf{N}: x_{\sigma_1(n)} \in U_k\}\|_\varphi=\mathfrak{u}(\sigma_1,\ell)=1,
\end{split}
\end{displaymath}
where $n_1:=\max\{n_0+1,s\}$. 
At this point, since $\varphi$ is a lscsm, it follows that there exists an integer $n_2>n_1$ such that $\varphi(\{n \in \mathbf{N} \cap [n_1, n_2]: x_{\sigma_1(n)} \in U_k\})> q$. Therefore $
S:=\{\sigma \in Z^\prime: \sigma \upharpoonright \{n_1,\ldots,n_2\}=\sigma_1 \upharpoonright \{n_1,\ldots,n_2\}\}
$ 
is a nonempty open set contained in $Z$ and disjoint from $W_{k,s}$. Indeed 
$$
\forall \sigma \in S, \quad \varphi(\{n \ge s: x_{\sigma(n)} \in U_k\}) \ge \varphi(\{n \in \mathbf{N} \cap [n_1, n_2]: x_{\sigma(n)} \in U_k\})> q
$$
by the monotonicity of $\varphi$. 
\end{proof}

\begin{lem}\label{lem:analoguecomeageroldsecond}
With the same hypotheses of Lemma \ref{lem:uppersemicontinuous}, we have
$$
\textstyle 
\forall \ell \in X,\quad 
\{\sigma \in \Sigma: \ell \in \Lambda_{\sigma(x)}(\mathcal{I})\}
=\bigcup_{q>0}V(\ell,q).
$$
In addition, $\tilde{S}(\ell,q):=\{\sigma \in \Sigma: \ell \in \Lambda_{\sigma(x)}(\mathcal{I},q)\}$ contains $V(\ell,q)$.
\end{lem}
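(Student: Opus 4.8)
The plan is to establish the displayed equality by rewriting its right-hand side as
$\bigcup_{q>0}V(\ell,q)=\{\sigma \in \Sigma: \mathfrak{u}(\sigma,\ell)>0\}$ and then proving the two inclusions; the ``In addition'' clause is immediate, since $\mathfrak{u}(\sigma,\ell)>q$ trivially gives $\mathfrak{u}(\sigma,\ell)\ge q$, i.e. $\ell \in \Lambda_{\sigma(x)}(\mathcal{I},q)$ by the definition in Lemma \ref{lem:uppersemicontinuous}, so that $V(\ell,q)\subseteq \tilde{S}(\ell,q)$. Throughout I fix $\ell \in X$ and a decreasing local base $(U_k)$ at $\ell$, and write $A_k:=\{n \in \mathbf{N}: x_{\sigma(n)} \in U_k\}$, so that $(A_k)$ is decreasing and, recalling that $\|\cdot\|_\varphi$ is a nonincreasing limit, $\mathfrak{u}(\sigma,\ell)=\lim_k \|A_k\|_\varphi=\inf_k \|A_k\|_\varphi$.

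For the inclusion ``$\subseteq$'', suppose $\ell \in \Lambda_{\sigma(x)}(\mathcal{I})$: by definition there is an infinite set $M=\{m_j: j \in \mathbf{N}\}\notin \mathcal{I}$ with $\lim_j x_{\sigma(m_j)}=\ell$. Since $(U_k)$ is a local base, for each $k$ we have $x_{\sigma(m_j)}\in U_k$ eventually, whence $M\setminus A_k$ is finite. Because $\|\cdot\|_\varphi$ is invariant modulo finite sets and monotone, $\|A_k\|_\varphi\ge \|M\cap A_k\|_\varphi=\|M\|_\varphi$ for every $k$, and $\|M\|_\varphi>0$ as $M\notin \mathrm{Exh}(\varphi)$. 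Hence $\mathfrak{u}(\sigma,\ell)\ge \|M\|_\varphi>0$, and $\sigma \in V(\ell,q)$ for $q:=\|M\|_\varphi/2$.

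The reverse inclusion is the main obstacle, as it requires an explicit construction of a witnessing subsequence. Assume $\delta:=\mathfrak{u}(\sigma,\ell)>0$; I will build $M\notin \mathcal{I}$ indexing a subsequence of $\sigma(x)$ converging to $\ell$, by a diagonal argument. Since $(A_k)$ is decreasing we have $\|A_j\|_\varphi\ge \delta$, and because $\|A_j\|_\varphi$ is the nonincreasing limit of $\varphi(A_j\setminus[1,t])$, it follows that $\varphi(A_j\setminus[1,t])\ge \delta$ for all $t$. Using that $\varphi$ is a lower semicontinuous submeasure, choose inductively integers $0=t_0<t_1<t_2<\cdots$ so that, writing $B_j:=A_j\cap (t_{j-1},t_j]$, one has $\varphi(B_j)>\delta/2$ for every $j$ (this is possible since $\lim_{t}\varphi(A_j\cap (t_{j-1},t])=\varphi(A_j\setminus[1,t_{j-1}])\ge \delta$). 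Put $M:=\bigcup_j B_j$. For each fixed $k$ and all $j\ge k$ we have $B_j\subseteq A_j\subseteq A_k$, so $M\setminus A_k\subseteq [1,t_{k-1}]$ is finite; thus $x_{\sigma(n)}\in U_k$ for all but finitely many $n\in M$, and the subsequence of $\sigma(x)$ indexed by $M$ converges to $\ell$. Finally $B_j\subseteq M\setminus[1,t_{j-1}]$ yields $\varphi(M\setminus[1,t_{j-1}])>\delta/2$ for all $j$, so $\|M\|_\varphi=\lim_j \varphi(M\setminus[1,t_{j-1}])\ge \delta/2>0$ and $M\notin \mathrm{Exh}(\varphi)=\mathcal{I}$. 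Therefore $\ell \in \Lambda_{\sigma(x)}(\mathcal{I})$, which completes the two inclusions and hence the proof.
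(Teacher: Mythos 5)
Your proof is correct and takes essentially the same route as the paper's: the forward inclusion is the same computation (monotonicity plus finite-invariance of $\|\cdot\|_\varphi$ applied to the index set of a witnessing subsequence), and the reverse inclusion is the same block construction, extracting finite pieces of the sets $A_k$ with $\varphi$-mass bounded below via lower semicontinuity, pushed successively to the right, so that their union is an index set outside $\mathcal{I}=\mathrm{Exh}(\varphi)$ along which $\sigma(x)$ converges to $\ell$. The only organizational difference is the ``In addition'' clause: you get it for free from the definition $\Lambda_{\sigma(x)}(\mathcal{I},q)=\{\ell\in X:\mathfrak{u}(\sigma,\ell)\ge q\}$ in Lemma \ref{lem:uppersemicontinuous} (legitimate as the paper states it), while the paper keeps the sharper bound $\varphi(F_k)\ge q$ in its construction so that the witnessing index set itself satisfies $\|\tau(\mathbf{N})\|_\varphi\ge q$ --- the form needed if one reads $\Lambda_{\sigma(x)}(\mathcal{I},q)$ in the witnessing-subsequence sense of \cite[Theorem 2.2]{MR3883171}; your construction only guarantees $\|M\|_\varphi\ge\mathfrak{u}(\sigma,\ell)/2$, which can fall below $q$.
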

\begin{proof}
Fix $\ell \in X$ and $\sigma \in \tilde{S}(\ell)$, where
$$
\tilde{S}(\ell)
:=
\{\sigma \in \Sigma: \ell \in \Lambda_{\sigma(x)}(\mathcal{I})\}.
$$
Then there exist $\tau \in \Sigma$ and $q>0$ such that $\lim_n x_{\tau(\sigma(n))}=\ell$ and $\|\tau(\mathbf{N})\|_\varphi \ge 2q$.  In particular, for each $k \in \mathbf{N}$ we have $x_{\tau(\sigma(n))} \in U_k$ for all large $n \in \mathbf{N}$. Hence 
\begin{displaymath}
\|\{n \in \mathbf{N}: x_{\sigma(n)} \in U_k\}\|_\varphi \ge \|\{n \in \mathbf{N}: x_{\sigma(n)} \in U_k\} \cap \tau(\mathbf{N})\|_\varphi =\| \tau(\mathbf{N})\|_\varphi \ge 2q.
\end{displaymath}
By the arbitrariness of $k$, it follows that $\mathfrak{u}(\sigma,\ell)\ge 2q>q$, that is, $\sigma \in V(\ell,q)$. 

Conversely, fix $\ell \in X$, $\sigma \in \Sigma$, and $q>0$ such that $\sigma \in V(\ell,q)$, hence $\|A_k\|_\varphi>q$ for all $k \in \mathbf{N}$, where $A_k:=\{n \in \mathbf{N}: x_{\sigma(n)} \in U_k\}$. 
Let us define recursively a sequence $(F_k)$ of finite subsets of $\mathbf{N}$ as it follows. Pick $F_1\subseteq A_1$ such that $\varphi(F_1) \ge q$ (which is possibile since $\varphi$ is a lscsm); then, for each integer $k\ge 2$, let $F_k$ be a finite subset of $A_k$ such that $\min F_k>\max F_{k-1}$ and $\varphi(F_k) \ge q$ (which is possible since $\|A_k \setminus [1,\max F_{k-1}]\|_\varphi=\|A_k\|_\varphi>q$). Let $(y_n)$ be the increasing enumeration of the set $\bigcup_k F_k$, and define $\tau \in \Sigma$ such that $\tau(n)=y_n$ for all $n$. It follows by construction that 
$$
\lim_{n\to \infty} x_{\tau(\sigma(n))}=\ell
\,\,\,\text{ and }\,\,\,
\|\tau(\mathbf{N})\|_\varphi \ge \liminf_{k\to \infty} \varphi(F_k) \ge q>0.
$$
Therefore $\ell \in \Lambda_{\sigma(x)}(\mathcal{I},q)\subseteq \Lambda_{\sigma(x)}(\mathcal{I})$, which concludes the proof.
\end{proof}

\begin{cor}\label{lem:fsigmanewlimit}
With the same hypotheses of Lemma \ref{lem:uppersemicontinuous}, 
$\tilde{S}(\ell,q)$ 
is comeager for each $\ell \in \clusterfin$ and $q \in (0,1)$.
\end{cor}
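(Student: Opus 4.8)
The plan is to deduce the statement from the dichotomy already established in Lemma \ref{lem:analoguecomeagerold} together with the inclusion recorded in Lemma \ref{lem:analoguecomeageroldsecond}, so that essentially no new analytic work is required. Fix $\ell \in \clusterfin$ and $q \in (0,1)$. By Lemma \ref{lem:analoguecomeageroldsecond} we have $V(\ell,q)\subseteq \tilde{S}(\ell,q)$, and by Lemma \ref{lem:analoguecomeagerold} the set $V(\ell,q)$ is either comeager or empty. Hence it suffices to prove that $V(\ell,q)$ is nonempty: this automatically upgrades it to a comeager set, and comeagerness then transfers to the larger set $\tilde{S}(\ell,q)$.

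To exhibit a point of $V(\ell,q)$, I would invoke the hypothesis $\ell \in \clusterfin$. Since $X$ is first countable, $\ell$ is the limit of some subsequence, i.e.\ there exists $\sigma \in \Sigma$ with $\lim_n x_{\sigma(n)}=\ell$ (this is exactly the fact already used inside the proof of Lemma \ref{lem:analoguecomeagerold}). For this particular $\sigma$ and every neighborhood $U_k$ of the chosen decreasing local base at $\ell$, the index set $\{n \in \mathbf{N}: x_{\sigma(n)} \in U_k\}$ is cofinite. Because $\|\cdot\|_\varphi$ is invariant modulo finite sets and the submeasure has been normalized so that $\|\mathbf{N}\|_\varphi=1$, each such index set has $\|\cdot\|_\varphi$-value equal to $1$; letting $k\to\infty$ gives $\mathfrak{u}(\sigma,\ell)=1$. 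As $q<1$, this yields the strict inequality $\mathfrak{u}(\sigma,\ell)>q$, so $\sigma \in V(\ell,q)$ and in particular $V(\ell,q)\neq \emptyset$.

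I do not expect a genuine obstacle here, since the delicate nowhere-density estimate has already been carried out in Lemma \ref{lem:analoguecomeagerold}. The only point deserving care is the computation $\mathfrak{u}(\sigma,\ell)=1$ for a genuinely convergent subsequence: it relies crucially on the normalization $\|\mathbf{N}\|_\varphi=1$ and on the finite-invariance of $\|\cdot\|_\varphi$, which together guarantee the \emph{strict} inequality $\mathfrak{u}(\sigma,\ell)>q$ for every $q \in (0,1)$ and thereby place $\sigma$ in $V(\ell,q)$ rather than merely in its closure.
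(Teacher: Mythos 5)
Your proof is correct and follows essentially the same route as the paper: the paper's own proof simply notes that $\tilde{S}(\ell,q)$ contains $V(\ell,q)$ (Lemma \ref{lem:analoguecomeageroldsecond}) and that the latter is comeager (Lemma \ref{lem:analoguecomeagerold}). The only difference is that you explicitly verify the nonemptiness of $V(\ell,q)$ via the computation $\mathfrak{u}(\sigma,\ell)=1$ for a convergent subsequence --- a step the paper leaves implicit (it already appears inside the proof of Lemma \ref{lem:analoguecomeagerold}) --- which is a worthwhile clarification but not a different argument.
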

\begin{proof}
Fix $\ell \in \clusterfin$ and $q \in (0,1)$. Then $\tilde{S}(\ell,q)$ contains $V(\ell,q)$ by Lemma \ref{lem:analoguecomeageroldsecond}, which is comeager by Lemma \ref{lem:analoguecomeagerold}.
%
%
\end{proof}

\begin{cor}\label{lem:fsigmanewlimit2}
With the same hypotheses of Lemma \ref{lem:uppersemicontinuous}, $\tilde{S}(\ell)$ is comeager for each $\ell \in \clusterfin$.
\end{cor}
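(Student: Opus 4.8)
The plan is to read this off the two preceding results with essentially no extra work, since the genuinely analytic part has already been absorbed into Lemma \ref{lem:analoguecomeagerold} and Corollary \ref{lem:fsigmanewlimit}. The key observation is simply that the $q$-truncated limit-point set sits inside the full one, so comeagerness of the former forces comeagerness of the latter.

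First I would fix $\ell \in \clusterfin$ and an arbitrary $q \in (0,1)$, say $q=\nicefrac{1}{2}$. Recalling the final inclusion established at the end of the proof of Lemma \ref{lem:analoguecomeageroldsecond}, namely $\Lambda_{\sigma(x)}(\mathcal{I},q)\subseteq \Lambda_{\sigma(x)}(\mathcal{I})$ for every $\sigma \in \Sigma$, it follows at once from the definitions that
$$
\tilde{S}(\ell,q)=\{\sigma \in \Sigma: \ell \in \Lambda_{\sigma(x)}(\mathcal{I},q)\}
\subseteq
\{\sigma \in \Sigma: \ell \in \Lambda_{\sigma(x)}(\mathcal{I})\}
=\tilde{S}(\ell).
$$
Then I would invoke Corollary \ref{lem:fsigmanewlimit}, which guarantees that $\tilde{S}(\ell,q)$ is comeager precisely because $\ell \in \clusterfin$ and $q \in (0,1)$. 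Since a set containing a comeager set is itself comeager, the displayed inclusion yields that $\tilde{S}(\ell)$ is comeager, as desired.

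Equivalently, one could argue through the decomposition $\tilde{S}(\ell)=\bigcup_{q>0}V(\ell,q)$ furnished by Lemma \ref{lem:analoguecomeageroldsecond}: fixing $\ell \in \clusterfin$ provides a $\sigma_1 \in \Sigma$ with $\lim_n x_{\sigma_1(n)}=\ell$, whence $\mathfrak{u}(\sigma_1,\ell)=\|\mathbf{N}\|_\varphi=1>\nicefrac{1}{2}$ and so $V(\ell,\nicefrac{1}{2})\neq\emptyset$; Lemma \ref{lem:analoguecomeagerold} then upgrades this nonempty set to a comeager one, and $\tilde{S}(\ell)\supseteq V(\ell,\nicefrac{1}{2})$ finishes the argument. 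I expect no real obstacle here: all the delicate nowhere-density estimates live in Lemma \ref{lem:analoguecomeagerold}, and this corollary is purely a bookkeeping consequence, so the only point to double-check is that the inclusion $\tilde{S}(\ell,q)\subseteq\tilde{S}(\ell)$ is being applied with a single fixed admissible $q$ rather than attempting to take a countable union (which is unnecessary and would only complicate matters).
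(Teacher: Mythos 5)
Your proof is correct and takes essentially the same route as the paper: both arguments reduce to the inclusion $\tilde{S}(\ell,\nicefrac{1}{2})\subseteq \tilde{S}(\ell)$ together with the comeagerness of $\tilde{S}(\ell,\nicefrac{1}{2})$ supplied by Corollary \ref{lem:fsigmanewlimit}. The only cosmetic difference is the provenance of that inclusion: the paper quotes the identity $\Lambda_{\sigma(x)}(\mathcal{I})=\bigcup_{q>0}\Lambda_{\sigma(x)}(\mathcal{I},q)$ from \cite[Theorem 2.2]{MR3883171}, whereas you extract it (equivalently, via your alternative route $\tilde{S}(\ell)=\bigcup_{q>0}V(\ell,q)$ with $V(\ell,\nicefrac{1}{2})$ nonempty, hence comeager by Lemma \ref{lem:analoguecomeagerold}) from Lemma \ref{lem:analoguecomeageroldsecond}, which is equally valid and slightly more self-contained.
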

\begin{proof}
Thanks to \cite[Theorem 2.2]{MR3883171}, we have
\begin{equation}\label{eq:inclusionlimitpoints}
\textstyle \Lambda_{\sigma(x)}(\mathcal{I})=\bigcup_{q>0}\Lambda_{\sigma(x)}(\mathcal{I},q).
\end{equation}
Therefore $\tilde{S}(\ell)$ contains $\tilde{S}(\ell,\nicefrac{1}{2})$, which is comeager by Corollary \ref{lem:fsigmanewlimit}.
%
%
\end{proof}

\begin{rmk}\label{rmk:lastpermutations}
All the analogues from Lemma \ref{lem:uppersemicontinuous} up to Corollary \ref{lem:fsigmanewlimit2} hold for permutations, the only difference being in the last part of the proof of Lemma \ref{lem:analoguecomeagerold}: let us show that
$$
\widehat{W}_{k,s}:=\bigcup_{t\ge s}\{\pi \in \Pi: \varphi(\{n\ge t: x_{\pi(n)} \in U_k\})\le q\}
$$
is nowhere dense for all $k,s \in \mathbf{N}$. To this aim, fix $\pi_0 \in \Pi$ and $n_0 \in \mathbf{N}$ which defines the nonempty open set 
$
G:=\{\pi \in \Pi: \pi \upharpoonright \{1,\ldots,n_0\}=\pi_0 \upharpoonright \{1,\ldots,n_0\}\}.
$
Set $n_1:=\max\{n_0+1,s\}$ and let $(y_n)$ be the increasing enumeration of the infinite set $\{n \in \mathbf{N}: x_n \in U_k\} \setminus \{\pi_0(1),\ldots,\pi_0(n_0)\}$. 
Since $\varphi$ is a lscsm, there exists $n_2 \in \mathbf{N}$ such that $\varphi(\{s,s+1,\ldots,n_2\})>q$. Lastly, let $G^\prime$ be the set of all $\pi \in G$ such that $\pi(n)=y_n$ for all $n \in \{s,s+1,\ldots,n_2\}$. We conclude that
$$
\forall \pi \in G^\prime, \quad \varphi(\{n\ge s: x_{\pi(n)} \in U_k\}) \ge \varphi(\{s,s+1,\ldots,n_2\})>q.
$$
Therefore $G^\prime$ is a nonempty open subset of $G$ which is disjoint from $\widehat{W}_{k,s}$.
\end{rmk}


Lastly, we prove Theorem \ref{thm:theoremlimit}.
\begin{proof}
[Proof of Theorem \ref{thm:theoremlimit}] 
The implications \ref{item:l1} $\implies$ \ref{item:l2} $\implies$ \ref{item:l3} are analogous to the ones in Theorem \ref{thm:theoremcluster}, replacing Lemma \ref{lem:fsigmanew} with Corollary \ref{lem:fsigmanewlimit2}.


\ref{item:l3} $\implies$ \ref{item:l1} Suppose that $\clusterfin \neq \emptyset$, otherwise the claim is trivial. Let $\mathscr{L}$ be a countable dense subset of $\clusterfin$, so that $\mathscr{L}\subseteq \Lambda_{\sigma(x)}(\mathcal{I}, \nicefrac{1}{2})$ for each $\sigma \in \tilde{S}:=\bigcap_{\ell \in \mathscr{L}}\tilde{S}(\ell, \nicefrac{1}{2})$, which is comeager by Corollary \ref{lem:fsigmanewlimit}. Fix $\sigma \in \tilde{S}$. 
On the one hand, taking into account \eqref{eq:inclusionlimitpoints}, we get $\Lambda_{\sigma(x)}(\mathcal{I}, \nicefrac{1}{2})\subseteq \Lambda_{\sigma(x)}(\mathcal{I}) \subseteq \mathrm{L}_{\sigma(x)}\subseteq \clusterfin$. On the other hand, since $\Lambda_{\sigma(x)}(\mathcal{I},\nicefrac{1}{2})$ is closed by Lemma \ref{lem:uppersemicontinuous}, we obtain $\clusterfin \subseteq \Lambda_{\sigma(x)}(\mathcal{I})$. Therefore $\tilde{\Sigma}_x(\mathcal{I})$ contains the comeager set $\tilde{S}$. 

The implications \ref{item:llll1} $\implies$ \ref{item:llll2} $\implies$ \ref{item:l3} $\implies$ \ref{item:llll1} are analogous, taking into account Remark \ref{rmk:lastpermutations}.
\end{proof}

\subsection*{Acknowledgments}
The authors are grateful to the anonymous referee for several accurate and constructive comments which led, in particular, to a simplification of the proof of Lemma \ref{lem:fsigmanew}.


\bibliographystyle{amsplain}

\end{document}